\setlist[enumerate]{
  label=(\thethm.\arabic*),
  before={\setcounter{enumi}{\value{equation}}},
  after={\setcounter{equation}{\value{enumi}}},
  itemsep=1ex
}
\setlist[itemize]{
  leftmargin=*,
  topsep=1ex,
  itemsep=1ex,
  label=$\circ$
}
\newcommand{\mypagesize}{
  \addtolength{\textwidth}{12pt}
  \addtolength{\textheight}{42pt}
  \calclayout
}
\newtheorem*{thm-plain}{Theorem}
\newtheorem{thm}{Theorem}[section]
\newtheorem{lem}[thm]{Lemma}
\newtheorem{prp}[thm]{Proposition}
\newtheorem{cor}[thm]{Corollary}
\newtheorem{ques}[thm]{Question}
\newtheorem{cons}[thm]{Construction}
\numberwithin{equation}{thm}
\theoremstyle{definition}
\newtheorem{dfn}[thm]{Definition}
\newtheorem*{dfn-plain}{Definition}
\theoremstyle{remark}
\newtheorem*{rem-plain}{Remark}
\newcommand{\inv}{^{-1}}
\newcommand{\from}{\colon}
\newcommand{\lto}{\longrightarrow}
\newcommand{\x}{\times}
\newcommand{\inj}{\hookrightarrow}
\newcommand{\bij}{\overset\sim\lto}
\newcommand{\isom}{\cong}
\newcommand{\defn}{\coloneqq}
\newcommand{\tensor}{\otimes}
\newcommand{\id}{\mathrm{id}}
\newcommand{\wt}{\widetilde}
\newcommand{\dual}{^{\smash{\scalebox{.7}[1.4]{\rotatebox{90}{\textup\guilsinglleft}}}}}
\newcommand{\ddual}{^{\smash{\scalebox{.7}[1.4]{\rotatebox{90}{\textup\guilsinglleft} \hspace{-.5em} \rotatebox{90}{\textup\guilsinglleft}}}}}
\newcommand{\factor}[2]{\left. \raise 2pt\hbox{$#1$} \right/\hskip -2pt \raise -2pt\hbox{$#2$}}
\def\rd#1.{\lfloor{#1}\rfloor}
\def\rp#1.{\lceil{#1}\rceil}
\def\tw#1.{\langle{#1}\rangle}
\renewcommand{\O}[1]{\mathscr{O}_{#1}}
\newcommand{\Reg}[1]{{#1}_{\mathrm{reg}}}
\newcommand{\cc}[2]{\mathrm{c}_{#1}(#2)}
\def\Hnought#1.#2.{\mathit{\Gamma} \!\left( #1, #2 \right)}
\def\HH#1.#2.#3.{\mathrm{H}^{#1} \!\left( #2, #3 \right)}
\def\hh#1.#2.#3.{h^{#1} \!\left( #2, #3 \right)}
\def\RR#1.#2.#3.{R^{#1} #2_* #3}
\def\HHc#1.#2.#3.{\mathrm{H}_{\mathrm{c}}^{#1} \!\left( #2, #3 \right)}
\def\Hh#1.#2.#3.{\mathrm{H}_{#1} \!\left( #2, #3 \right)}
\def\Hom#1.#2.{\mathrm{Hom} \!\left( #1, #2 \right)}
\def\End#1.{\mathrm{End} \!\left( #1 \right)}
\def\sHom#1.#2.{\mathscr{H}\!om \!\left( #1, #2 \right)}
\def\Ext#1.#2.#3.{\mathrm{Ext}^{#1} \!\left( #2, #3 \right)}
\def\sExt#1.#2.#3.{\mathscr{E}\!xt^{#1} \!\left( #2, #3 \right)}
\renewcommand{\P}[1]{\mathbb P^{#1}}
\newcommand{\kahler}{K{\"{a}}hler\xspace}
\newcommand{\mfs}{Mori fibre space\xspace}
\DeclareMathOperator{\Pic}{Pic}
\DeclareMathOperator{\Picn}{Pic^\circ}
\DeclareMathOperator{\Gal}{Gal}
\DeclareMathOperator{\Def}{Def}
\renewcommand{\theta}{\vartheta}
\renewcommand{\phi}{\varphi}
\newcommand{\Z}{\ensuremath{\mathbb Z}}
\newcommand{\Q}{\ensuremath{\mathbb Q}}
\newcommand{\C}{\ensuremath{\mathbb C}}
\newcommand{\bP}{\ensuremath{\mathbb P}}
\newcommand{\frS}{\mathfrak S} \newcommand{\frX}{\mathfrak X}
 \newcommand{\sE}{\mathscr E} 
  \newcommand{\sL}{\mathscr L}
\newcommand{\sM}{\mathscr M}  \newcommand{\sO}{\mathscr O}
\newcommand{\sP}{\mathscr P} \newcommand{\sQ}{\mathscr Q} 
  \newcommand{\sX}{\mathscr X}
\newcommand{\sY}{\mathscr Y} 
  \newcommand{\cC}{\mathcal C}
  \newcommand{\cR}{\mathcal R}
\newcommand{\flaccid}{scenic\xspace}
\newcommand{\Flaccid}{Scenic\xspace}
\definecolor{forrest}{RGB}{81,133,49}
\definecolor{mydarkblue}{RGB}{10,92,153}
\title{On the Kodaira problem for uniruled K{\"{A}}hler spaces}
\author{Patrick Graf}
\address{Department of Mathematics, University of Utah, 155~South 1400~East, Salt Lake City, UT 84112}
\email{\href{mailto:patrick.graf@uni-bayreuth.de}{patrick.graf@uni-bayreuth.de}}
\urladdr{\href{http://www.pgraf.uni-bayreuth.de/en/}{www.graficland.uni-bayreuth.de}}
\author{Martin Schwald}
\address{Fakult\"at f\"ur Mathematik, Universit\"at Duisburg--Essen, 45117 Essen, Germany}
\email{\href{mailto:martin.schwald@uni-due.de}{martin.schwald@uni-due.de}}
\urladdr{\href{http://www.esaga.uni-due.de/martin.schwald/}{www.esaga.uni-due.de/martin.schwald/}}
\date{May 6, 2020}
\thanks{The first author was partially supported by a DFG Research Fellowship.
The second author was partially supported by the DFG Collaborative Research Centre SFB/TR 45.}
\keywords{\kahler manifolds, uniruled \kahler spaces, Mori fibrations, algebraic approximation, small projective deformations, locally trivial deformations}
\subjclass[2010]{32J27, 14E30, 32G05}
\begin{document}

\begin{abstract}
We discuss the Kodaira problem for uniruled \kahler spaces.
Building on a construction due to Voisin, we give an example of a uniruled \kahler space~$X$ such that every run of the $K_X$-MMP immediately terminates with a \mfs, yet~$X$ does not admit an algebraic approximation.
Our example also shows that for a Mori fibration, approximability of the base does not imply approximability of the total space.
\end{abstract}

\maketitle

\thispagestyle{empty}

\section{Introduction}

A fundamental problem in complex algebraic and \kahler geometry is to determine the relationship between smooth projective varieties and compact \kahler manifolds.
Since a compact complex manifold is projective if and only if it admits a \kahler form whose cohomology class is rational, the following question suggests itself.

\begin{ques}[Kodaira problem] \label{Kodaira problem}
Is it possible to make any compact \kahler manifold~$X$ projective by an arbitrarily small deformation $X_t$ of its complex structure?
\end{ques}

\noindent
Such a deformation will be called an \emph{algebraic approximation} of $X$.
See \cref{def alg approx} for the precise notion.

Kodaira proved that every compact \kahler surface can be deformed to an algebraic surface~\cite[Thm.~16.1]{Kod63}.
In higher dimensions, the Kodaira problem remained open until in~\cite{Voi04} Voisin gave counterexamples (of Kodaira dimension $\kappa = 0$) in any dimension $\ge 4$.
In~\cite{Voi06}, she even constructed examples (uniruled and of even dimension $\ge 10$) of compact \kahler manifolds $X_{\mathrm{Voi}}$ such that no compact complex manifold $X'$ bimeromorphic to $X_{\mathrm{Voi}}$ admits an algebraic approximation.

At first sight, this seems to provide a definite negative answer to the Kodaira problem.
However, from the viewpoint of the Minimal Model Program (MMP), it is natural to take into account also singular bimeromorphic models.
A most influential statement in this direction is Peternell's conjecture that minimal models of compact \kahler manifolds should admit an algebraic approximation.
This conjecture has recently spawned substantial progress on the Kodaira problem in dimension three~\cite{AlgApprox, ClaudonHoeringKahlerGroups, Lin16, Lin17a, Lin17b}.

That said, an obvious desire arises to revisit Voisin's example $X_{\mathrm{Voi}}$ and to investigate whether some singular model of it is approximable.
By construction, $X_{\mathrm{Voi}}$ comes equipped with a bimeromorphic map to a mildly singular \kahler space $X$, and the map $X_{\mathrm{Voi}} \to X$ is a (composition of) $K_X$-negative extremal contractions.
Our first result shows that this new space $X$ does not admit an algebraic approximation.
Of course, one would then like to contract (or flip) further extremal rays, hoping to arrive at an approximable model.
We show that this is impossible: $X$ is minimal in the sense that every run of the $K_X$-MMP immediately yields a \mfs.
Actually, we prove an even stronger statement---see~\labelcref{main.cont} below.

In a slightly different direction, one might consider the Mori fibrations of a given uniruled space and ask whether approximability of the base of such a fibration implies approximability of the total space.
Our example $X$ shows that this is likewise not the case.
Summing up, what we prove is the following:

\begin{thm}[Non-approximable minimal uniruled \kahler space] \label{main}
For every even number $n \ge 10$, there exists an $n$-dimensional uniruled compact \kahler space $X$ with the following properties:
\begin{enumerate}
\item \label{main.sg} $X$ is simply connected and has only terminal quotient singularities.
\item \label{main.cont} Any bimeromorphic map $X \to X'$ to a normal complex space $X'$ is an isomorphism.
In particular, every run of the $K_X$-MMP immediately terminates with a Mori fibration.
\item \label{main.mori} There is a Mori fibration $X \to Y$ such that $Y$ admits an algebraic approximation.
\item \label{main.approx} $X$ does not admit an algebraic approximation.
\end{enumerate}
\end{thm}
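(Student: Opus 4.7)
The plan is to realise $X$ as a mildly singular Kähler contraction of Voisin's uniruled counterexample $X_{\mathrm{Voi}}$ from \cite{Voi06}, and then to transfer the obstruction to approximability from $X_{\mathrm{Voi}}$ down to~$X$.

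For the construction and for parts \labelcref{main.sg}--\labelcref{main.mori} I would start from Voisin's example, a uniruled compact \kahler manifold of even dimension $\ge 10$ built (roughly) as a projective-bundle-like construction over a non-projective \kahler base. The space $X$ is obtained by contracting all $K_{X_{\mathrm{Voi}}}$-negative divisorial extremal rays of $X_{\mathrm{Voi}}$, possibly after passing to a quotient by a suitable finite group action so that the remaining singularities are cyclic quotient ones. Since these are \kahler MMP contractions of a uniruled space, Kählerness propagates to $X$. Terminality of the quotient singularities in \labelcref{main.sg} reduces to a local check on the smooth cover. For \labelcref{main.cont} I would analyse the Mori cone $\overline{\mathrm{NE}}(X)$: by construction we have already contracted the divisorial rays, and in Voisin's setup what remains corresponds precisely to a $\bP^k$-bundle structure, so every extremal ray of $X$ is of fibering type, and consequently no further bimeromorphic modification is possible. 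The same bundle structure supplies the Mori fibration $X \to Y$ in \labelcref{main.mori}, and $Y$ is a small modification of the base used by Voisin, chosen sufficiently simply (typically built from a complex torus) that an algebraic approximation can be written down by hand.

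The heart of the theorem is \labelcref{main.approx}, which I would attack by contradiction. Given an algebraic approximation $\pi \from \cX \to \Delta$ of $X$, the goal is to build from it an algebraic approximation of some compact \kahler manifold bimeromorphic to $X_{\mathrm{Voi}}$, directly violating \cite{Voi06}. The natural idea is to deform the bimeromorphic contraction $X_{\mathrm{Voi}} \to X$ along the family, obtaining a simultaneous family $\cX_{\mathrm{Voi}} \to \Delta$ whose central fibre is $X_{\mathrm{Voi}}$ and whose generic fibre maps bimeromorphically onto the projective $\cX_t$. Projectivity then pulls back to a bimeromorphic projective model of $\cX_{\mathrm{Voi},t}$, which is the forbidden approximation.

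The principal obstacle lies in executing this lifting step rigorously in the \kahler (non-projective) category. One must work with locally trivial, or at least appropriately flat, deformations of the singular $X$ and relate them to honest deformations of the smooth $X_{\mathrm{Voi}}$ across the contraction. Concretely this will reduce to vanishing statements of the form $R^1 \pi_* \T{X_{\mathrm{Voi}}} = 0$ (or a suitable equivariant variant for the quotient), together with unobstructedness of the relevant deformation functor; verifying these hinges on the explicit geometry of Voisin's example and is the delicate point of the argument. A helpful flexibility is that \cite{Voi06} rules out approximations of \emph{any} bimeromorphic model of $X_{\mathrm{Voi}}$, so one need not lift to $X_{\mathrm{Voi}}$ on the nose --- any bimeromorphic family of \kahler manifolds over $\Delta$ whose central fibre is bimeromorphic to $X_{\mathrm{Voi}}$ and whose generic fibre is projective would suffice to close the argument.
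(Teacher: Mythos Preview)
Your overall strategy is right --- construct $X$ from Voisin's example and transfer the obstruction --- but your plan for~\labelcref{main.approx} misses the key simplifying observation, and your treatment of~\labelcref{main.cont} is vaguer than it needs to be.

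For~\labelcref{main.approx}, you propose to lift a deformation of $X$ to a deformation of $X_{\mathrm{Voi}}$ by proving cohomological vanishing of the form $R^1\pi_*\T{X_{\mathrm{Voi}}} = 0$ along the contraction, and you correctly flag this as ``the delicate point''. The paper sidesteps this delicacy entirely. The crucial observation is that $X$ has \emph{rigid singularities}: the quotient singularities of $X$ have codimension $\ge 5$ (indeed $\ge 3$ suffices), and such singularities are known to be rigid. Hence \emph{every} deformation of $X$ is automatically locally trivial. One then applies the functorial resolution of singularities to the total space of any such deformation; because the family is locally trivial and functorial resolution commutes with smooth morphisms, the resolved total space is again a locally trivial family, now of smooth manifolds, whose central fibre is a resolution $\wt X$ of $X$. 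Voisin's theorem applies to $\wt X$ (it is bimeromorphic to $X_{\mathrm{Voi}}$), so no fibre of the resolved family is projective; since functorial resolution is a projective morphism, no fibre of the original family was projective either. No vanishing theorems or unobstructedness statements are needed.

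For~\labelcref{main.cont}, your Mori-cone analysis is not quite adequate: the claim is that \emph{every} bimeromorphic morphism $X \to X'$ to a normal space is an isomorphism, not merely that there are no MMP-type contractions. The paper's argument is more elementary and more robust. Any bimeromorphic morphism that is not an isomorphism contracts a curve (via Hironaka's Chow lemma). Lifting to the finite cover $Z$, a $\P1\times\P1$-bundle over $T \times T\dual$, one observes that $T \times T\dual$ contains no curves at all (this is a consequence of the ``\flaccid torus'' hypothesis), so any contracted curve lies in a fibre $\P1\times\P1$. A short argument then shows that contracting one fibre of either $\P1$-bundle projection forces contracting all of them, contradicting bimeromorphicity. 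This avoids any appeal to the structure of the K\"ahler cone or to MMP theory for singular K\"ahler spaces in dimension $\ge 10$, which is not available.
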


In~\cite{Lin17b}, Lin has shown that any uniruled \kahler \emph{threefold} is approximable.
Our result shows that in higher dimensions, the situation becomes considerably more complicated.
We are not aware of any natural condition on a uniruled \kahler space that would guarantee, at least conjecturally, the existence of an algebraic approximation.
This suggests that higher-dimensional uniruled spaces are quite pathological from this point of view.

\subsection*{Open questions}

We cannot exclude the possibility that our example $X$ is bimeromorphic to an approximable \kahler space $X'$ in some haphazard way.
But by~\labelcref{main.cont}, the existence of such an $X'$ would not be explained by general principles such as the MMP.
Hence from a systematic viewpoint, we do not expect such an $X'$ to exist.

Nevertheless, this is of course an interesting question.
All we can say at the moment is that such an $X'$ would necessarily have non-rigid singularities.
This follows from our proof of~\labelcref{main.approx}.

\subsection*{Acknowledgements}

This project was started during a stay at the Ma\-the\-ma\-tisch\-es Forschungs\-institut Ober\-wolfach, whose hospitality is unmatched.
We also thank the referee for his efforts to improve the paper.

\section{Basic facts and definitions}

\subsection*{Complex spaces}

All complex spaces are assumed to be separated, connected and reduced, unless otherwise stated.

An irreducible compact complex space $X$ is said to be \emph{of Fujiki class $\cC$} (or \emph{in $\cC$}, for short) if it is bimeromorphic to a compact \kahler manifold.
We say that $X$ is \emph{Moishezon} if its field of meromorphic functions $\sM(X)$ has maximal transcendence degree $\operatorname{trdeg}_\C \sM(X) = \dim X$.
Being Moishezon is equivalent to being bimeromorphic to a projective manifold.
We say that a (not necessarily irreducible) compact complex space is Moishezon if each of its irreducible components is Moishezon.

\subsection*{Resolution of singularities}

A \emph{resolution of singularities} of a complex space $X$ is a proper bimeromorphic morphism $f \from \wt X \to X$, where $\wt X$ is smooth.
We say that the resolution is \emph{projective} if $f$ is a projective morphism.
In this case, if $X$ is projective (resp.~compact \kahler) then so is $\wt X$.
A resolution is said to be \emph{strong} if it is an isomor\-phism over the smooth locus of $X$.
A \emph{log resolution} is a resolution whose exceptional locus is a simple normal crossings divisor in $\wt X$.
It will be important for us that resolving singularities is not only possible any-old-how, but there is a canonical way of doing so:

\begin{thm}[Functorial resolutions] \label{funct res}
There exists a \emph{resolution functor} which assigns to any complex space $X$ a strong projective log resolution $\pi_X \from \cR(X) \to X$, such that $\cR$ commutes with smooth maps in the following sense:
For any smooth morphism $f \from W \to X$, there is a unique smooth morphism $\cR(f) \from \cR(W) \to \cR(X)$ such that the following diagram is a fibre product square.
\[ \xymatrix{
\cR(W) \ar^-{\cR(f)}[rr] \ar_-{\pi_W}[d] & & \cR(X) \ar^-{\pi_X}[d] \\
W \ar^-f[rr] & & X.
} \]
\end{thm}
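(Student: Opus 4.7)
The plan is to invoke the canonical (functorial) resolution algorithm in the analytic category, due to Hironaka and refined by Bierstone--Milman, Villamayor, W\l{}odarczyk, and Koll\'ar. The statement we want is essentially the content of Bierstone--Milman's paper on canonical desingularization (\emph{Invent. Math.}~128, 1997) or of Koll\'ar's \emph{Lectures on Resolution of Singularities}; the required functoriality under smooth morphisms is built into the algorithm. What remains is to extract the statement of the theorem from these references and translate it into the formulation given above.

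Recall that the canonical resolution is constructed as a (locally finite) sequence of blow-ups
\[ \pi_X \from \cR(X) \lto \cdots \lto X_1 \lto X_0 = X, \]
where each $X_{i+1} \to X_i$ is the blow-up of $X_i$ along a smooth closed analytic subspace $Z_i \subset X_i$. The center $Z_i$ is taken to be the maximum locus of a \emph{local invariant} $\operatorname{inv}(x)$, which at each point is computed from the analytic-local structure of $X_i$ (via local embeddings into smooth ambient spaces together with orders of various auxiliary ideal sheaves). The essential feature of the invariant is its smooth-invariance: for every smooth morphism $g \from U \to V$ of complex spaces and every point $u \in U$, one has $\operatorname{inv}_U(u) = \operatorname{inv}_V(g(u))$.

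Given a smooth morphism $f \from W \to X$, I would then argue by induction on the stage $i$ of the algorithm. Assume inductively that $W_i \isom W \x_X X_i$, with the induced projection to $X_i$ smooth. Smooth-invariance of $\operatorname{inv}$ forces the center chosen by the algorithm on $W_i$ to equal the preimage of $Z_i$; since blowing up commutes with flat base change, the next blow-up $W_{i+1} \to W_i$ is precisely the base change of $X_{i+1} \to X_i$, and the induced morphism $W_{i+1} \to X_{i+1}$ remains smooth. Passing to the composition yields both the fibre product square and the smooth morphism $\cR(f)$; its uniqueness is automatic since $\pi_W$ is surjective and an isomorphism on a dense open subset. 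The resolution $\pi_X$ is \emph{strong} because each center $Z_i$ is, by construction, contained in the singular locus of $X_i$, and \emph{projective} because it is a (locally finite) composition of blow-ups along closed analytic subspaces. The real difficulty---producing a smooth-invariant $\operatorname{inv}$ for which the algorithm actually terminates locally on $X$---is precisely what is accomplished in the cited literature, and is taken as a black box here.
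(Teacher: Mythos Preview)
Your proposal is correct and aligns with the paper's approach: the paper simply cites \cite[Thm.~3.45]{Kol07} for this result, and your sketch is an accurate summary of what that reference establishes. You are giving more detail than the paper does, but the content and the source are the same.
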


\begin{proof}
See~\cite[Thm.~3.45]{Kol07}.
\end{proof}

\subsection*{\kahler spaces} \label{sec kahler spaces}

While we will not work directly with the definition of a singular \kahler space, we include the definition here for the reader's convenience.

\begin{dfn}[\kahler space] \label{def kahler}
Let $X$ be a normal complex space.
A \emph{\kahler form} $\omega$ on $X$ is a \kahler form $\omega^\circ$ on the smooth locus $\Reg X \subset X$ such that $X$ can be covered by open sets $U_\alpha$ with the following property: there is an embedding $U_\alpha \inj W_\alpha$ of $U_\alpha$ as an analytic subset of an open set $W_\alpha \subset \C^{n_\alpha}$ and a \kahler form $\wt\omega_\alpha$ on $W_\alpha$ such that
\[ \omega^\circ \big|_{U_\alpha \cap \Reg X} = \wt\omega_\alpha \big|_{U_\alpha \cap \Reg X}. \]
A normal complex space $X$ is said to be \emph{\kahler} if there exists a \kahler form on $X$.
\end{dfn}

For example, the analytification of a normal complex projective variety is a \kahler space.

\subsection*{Deformation theory} \label{sec def theory}

We collect some notation and basic facts from deformation theory.

\begin{dfn}[Deformations of complex spaces]
A \emph{deformation} of a complex space $X$ is a flat morphism $\frX \to (S, 0)$ from a (not necessarily reduced) complex space $\frX$ to a complex space germ $(S, 0)$, together with the choice of an isomorphism $\frX_0 \isom X$, where we write $\frX_s \defn \pi\inv(s)$ for the fibre over any $s\in S$.
We usually suppress both the base point $0 \in S$ and the choice of the isomorphism from notation.
Deformations of complex space \emph{germs} are defined similarly.
\end{dfn}

\begin{dfn}[Locally trivial deformations]
A deformation $\pi \from \frX \to S$ is called \emph{locally trivial} if for every $x \in \frX_0$ there exist open subsets $0 \in S^\circ \subset S$ and $x \in U \subset \pi\inv(S^\circ)$ and an isomorphism
\[ \xymatrix{
U \ar^-\sim[rr] \ar_-\pi[dr] & & (\frX_0 \cap U) \x S^\circ \ar^-{\operatorname{pr}_2}[dl] \\
& S^\circ. &
} \]
\end{dfn}

\begin{dfn}[Rigid singularities] \label{def rigid sing}
A complex space germ $(X, x)$ is called \emph{rigid} if every deformation of $X$ is trivial.
A complex space $X$ is said to have \emph{rigid singularities} if for each $x \in X$, the germ $(X, x)$ is rigid.
Equivalently, every deformation of $X$ is locally trivial.
\end{dfn}

\begin{dfn}[Algebraic approximations] \label{def alg approx}
Let $X$ be a compact complex space and $\pi \from \frX \to S$ a deformation of $X$.
Consider the set of projective fibres
\[ S^{\mathrm{alg}} \defn \big\{ s \in S \;\big|\; \frX_s \text{ is projective} \big\} \subset S \]
and its closure $\overline{S^{\mathrm{alg}}} \subset S$.
We say that $\frX \to S$ is an \emph{algebraic approximation of $X$} if $0 \in \overline{S^{\mathrm{alg}}}$.
\end{dfn}

\section{Voisin's example: construction and properties}

The aim of this section is threefold.
First we recall Voisin's example from~\cite{Voi06} in order to fix notation and for the reader's convenience.
Second, we investigate some of its properties which have not been discussed by Voisin.
In particular, we take a closer look at the singularities arising in the construction.
Third, we make the example as concrete as possible by providing an explicit example of Voisin's ``property (\textasteriskcentered)''.

\begin{dfn}[\Flaccid tori]
A \emph{\flaccid torus} is a pair $(T, \phi)$ consisting of an $n$-dimensional complex torus $T$ and an endomorphism $\phi \from T \to T$ such that the induced map $\phi_* \from \Hh1.T.\C. \to \Hh1.T.\C.$ has the following property:
the eigenvalues $\mu_1, \dots, \mu_{2n}$ of $\phi_*$ are pairwise distinct, none of them are real, and the Galois group $\Gal \left( \factor{ \Q( \mu_1, \dots, \mu_{2n} ) }{ \Q } \right)$ is the full symmetric group $\frS_{2n}$.
\end{dfn}

\subsection{Polynomials with large Galois group}

In~\cite[\S1]{Voi04}, it is explained how to construct a \flaccid torus starting from a rank $2n$ lattice $\Gamma$ and an endomorphism $\phi_\Z$ of $\Gamma$ whose characteristic polynomial has full symmetric Galois group and no real roots, as above.
So for us it only remains to give an example of such a lattice and endomorphism.
We will see that such examples are abundant for any value of $n$.
The following theorem gives a criterion for the characteristic polynomial $f$ of $\phi_\Z$ to have the desired Galois group.

\begin{thm}[Polynomials with full symmetric Galois group] \label{vanderWaerden}
Let $f \in \Z[x]$ be a monic polynomial of degree $d$ with the following properties:
\begin{enumerate}
\item The image of $f$ in $\mathbb F_2[x]$ is irreducible.
\item The image of $f$ in $\mathbb F_3[x]$ splits into a linear factor and an irreducible factor of degree $d - 1$.
\item The image of $f$ in $\mathbb F_5[x]$ splits into an irreducible quadratic factor and one or two irreducible factors of odd degree.
\end{enumerate}
Then the splitting field $K$ of $f$ has Galois group $\Gal(K/\Q) = \frS_d$. \qed
\end{thm}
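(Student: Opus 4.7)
The plan is to convert each factorization hypothesis into the existence of an element of prescribed cycle type in $G \defn \Gal(K/\Q)$, viewed as a subgroup of $\frS_d$ through its action on the $d$ roots of $f$, and then to finish with a short group-theoretic argument.

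The key input is the Dedekind--Frobenius theorem: whenever $p$ is a prime such that the reduction $\bar f \defn f \bmod p$ is separable in $\mathbb F_p[x]$, the partition of $d$ given by the degrees of the irreducible factors of $\bar f$ coincides with the cycle type of a Frobenius element at some prime of the splitting field lying over $p$, regarded as an element of $G \subset \frS_d$. Separability holds automatically in each of our three cases: irreducible polynomials over a finite field are separable, since $\mathbb F_p$ is perfect, and in~(2) the linear and degree-$(d-1)$ factors are coprime (otherwise the supposed irreducible factor of degree $d-1$ would have a known root).

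First, I would apply the theorem at $p = 2$ to produce a $d$-cycle in $G$; in particular $G$ acts transitively on the roots. At $p = 3$, I get an element of cycle type $(1, d-1)$, so some root is fixed by a $(d-1)$-cycle in $G$. Combined with transitivity, this shows that for every root $x$ the stabilizer $G_x$ acts transitively on the remaining $d-1$ roots; hence $G$ is $2$-transitive. At $p = 5$, I obtain an element $\sigma \in G$ whose cycle type is $(2, c_1, \ldots, c_r)$ with every $c_i$ odd. Setting $N \defn c_1 \cdots c_r$, the power $\sigma^N$ is a single transposition in $G$.

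It remains to invoke the classical fact that a $2$-transitive subgroup of $\frS_d$ containing a transposition equals $\frS_d$: by $2$-transitivity any other transposition $(ij)$ is a $G$-conjugate of the transposition found above, so $G$ contains every transposition, and transpositions generate $\frS_d$. This forces $G = \frS_d$, as desired. There is no real obstacle here---the result is classical---the only point deserving care is the separability check that legitimises the appeal to Dedekind--Frobenius.
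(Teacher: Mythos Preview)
Your argument is correct and is exactly the classical proof: the paper itself does not argue the result but simply cites van der Waerden's \emph{Algebra}, and what you have written is precisely the proof one finds there---Dedekind's theorem applied at $p=2,3,5$ yields a $d$-cycle, a $(d-1)$-cycle, and (after raising to the odd number $N$) a transposition, and any $2$-transitive subgroup of $\frS_d$ containing a transposition is all of $\frS_d$. The only nitpick is that your separability discussion covers cases~(1) and~(2) explicitly but not~(3); there you should also remark that the quadratic factor cannot coincide with an odd-degree factor and that, when two odd-degree factors occur, they are understood to be distinct (as is implicit in the phrase ``splits into'').
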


\begin{proof}
See~\cite[\S66, p.~204]{vanderWaerdenAlgebra}.
\end{proof}

For any prime $p$, there exist irreducible polynomials over $\mathbb F_p$ of any given degree.
Thus for any $d$ we can find monic polynomials $f_2, f_3, f_5 \in \Z[x]$ which over $\mathbb F_2, \mathbb F_3, \mathbb F_5$ split as described in \cref{vanderWaerden}.
Then $f \defn -15 f_2 + 10 f_3 + 6 f_5 + 30 k$ is, for any $k \in\Z$, a monic polynomial of degree $d$ with Galois group $\frS_d$.
If $d = 2n$ is even and $k \gg 0$ sufficiently big, then this polynomial does not have any real roots.
For a concrete example, consider the case $n = 4$, which is the smallest value to which~\cite{Voi06} applies.
Then we may take
\begin{align*}
f & = -15 \underbrace{ (x^8 + x^4 + x^3 + x + 1) }_{\text{irreducible mod $2$}} + 10 (x - 1) \underbrace{ (x^7 + x^2 + 2) }_{\text{irred.~mod $3$}} \\[2ex]
  & \hspace{2em} + 6 \underbrace{(x^2 + 2) (x^3 + x + 1) (x^3 + x + 4)}_{\text{each factor irreducible mod $5$}} + 120 \\[2ex]
  & = x^8 - 10 x^7 + 24 x^6 + 30 x^5 + 15 x^4 + 85 x^3 + 26 x^2 + 65 x + 133 \in \Z[x].
\end{align*}
For any $f$ as above, set $\Gamma \defn \factor{\Z[x]}{(f)}$ and take $\phi_\Z \from \Gamma \to \Gamma$ to be multiplication by $x$.
Since $f$ is monic, $\Gamma$ is a lattice and by construction, the minimal polynomial of $\phi_\Z$ is $f$.
By degree reasons, $f$ is then also the characteristic polynomial of $\phi_\Z$.

\subsection{Voisin's construction} \label{voisin construction}

Before we sum up the construction in~\cite{Voi06}, recall the following standard definitions.

\begin{dfn}[Dual torus, Poincar\'e bundle, Kummer construction]
Let $T$ be an $n$-dimensional complex torus.
\begin{enumerate}
\item \label{dual torus} The \emph{dual torus} of $T$ is defined as
\[ T\dual \defn \factor{\HH1.T.\O T.}{\HH1.T.\Z.}. \]
By the exponential sequence on $T$, the map $\exp \from \HH1.T.\O T. \to \HH1.T.\O T^*.$ induces an isomorphism
\[ T\dual \bij \Picn(T) \defn \ker \left( \HH1.T.\O T^*. \xrightarrow{\; \mathrm c_1 \;} \HH2.T.\Z. \right). \]
This identifies $T\dual$ with $\Picn(T)$, the group of topologically trivial holomorphic line bundles on $T$.
For a point $t \in T\dual$, we will denote the corresponding line bundle by $\sL_t$.
\item \label{poincare} The \emph{Poincar\'e bundle} $\sP$ on $T \x T\dual$ is the line bundle, unique up to isomorphism, with the following two properties:
\begin{itemize}
\item For all $t \in T\dual$, we have $\sP \big|_{ T \x \{ t \} } \cong \sL_t$.
\item $\sP \big|_{ \{ 0 \} \x T\dual } \isom \O{T\dual}$ is trivial.
\end{itemize}
If $\phi$ is an endomorphism of $T$, we define the \emph{twisted Poincar\'e bundle} on $T \x T\dual$ as $\sP_\phi \defn (\phi, \id_{T\dual})^* \sP$.
In particular, we have $\sP = \sP_{\id_T}$.
\item Consider the automorphism $i$ of $T$ given by $t \mapsto -t$.
Its fixed points are exactly the $2^{2n}$ two-torsion points of $T$, the set of which we denote by $\tau_2(T)$.
The \emph{(singular) Kummer variety} associated to $T$ is
\[ K(T) \defn \factor{T}{\langle i \rangle}. \]
\end{enumerate}
\end{dfn}

\begin{lem}[Pulling back the Poincar\'e bundle] \label{poincare pullback}
Let $T$ be a complex torus with an endomorphism $\phi$. We have the following isomorphisms:
\begin{align*}
(-\id_T, \id_{T\dual})^* \sP_\phi & \isom \sP_\phi\inv, \\
(\id_T, -\id_{T\dual})^* \sP_\phi & \isom \sP_\phi\inv.
\end{align*}
These isomorphisms are unique if we require them to respect a choice of trivialization $\sP \big|_{(0,0)} \isom \C$ fixed in advance.
\end{lem}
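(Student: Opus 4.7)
My plan is to first reduce both statements to the untwisted case $\phi = \id_T$ via a simple commutativity argument, then prove the untwisted case using the see-saw principle; uniqueness will follow from the compactness of $T \x T\dual$.

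For the reduction, set
\[ A \defn (-\id_T, \id_{T\dual}), \quad B \defn (\id_T, -\id_{T\dual}), \quad F \defn (\phi, \id_{T\dual}), \]
all viewed as morphisms $T \x T\dual \to T \x T\dual$. Since $\phi$ is a group endomorphism, $A$ and $B$ each commute with $F$, so
\[ A^* \sP_\phi = A^* F^* \sP = F^* A^* \sP \quad \text{and} \quad B^* \sP_\phi = F^* B^* \sP. \]
It therefore suffices to establish $A^* \sP \isom \sP\inv$ and $B^* \sP \isom \sP\inv$; the claimed isomorphisms for $\sP_\phi$ then arise by applying $F^*$.

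To prove $A^* \sP \isom \sP\inv$, I would apply see-saw to $\sM \defn A^* \sP \tensor \sP$. On a slice $T \x \{t\}$ the restriction is $(-\id_T)^* \sL_t \tensor \sL_t$; because $T\dual \bij \Picn(T)$ is an isomorphism of complex Lie groups and $-\id_T$ induces multiplication by $-1$ on $\Picn(T)$, we have $(-\id_T)^* \sL_t \isom \sL_{-t} \isom \sL_t\inv$, so this slice is trivial. On $\{0\} \x T\dual$ the morphism $A$ acts as the identity, so $\sM|_{\{0\} \x T\dual}$ is the tensor square of $\sP|_{\{0\} \x T\dual} \isom \O{T\dual}$, hence also trivial. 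See-saw then forces $\sM \isom \O{T \x T\dual}$, i.e.\ $A^* \sP \isom \sP\inv$. The argument for $B^* \sP$ is entirely analogous: $B$ sends $T \x \{t\}$ to $T \x \{-t\}$ via $\id_T$, so $B^* \sP|_{T \x \{t\}} \isom \sL_{-t} \isom \sL_t\inv$, while $B|_{\{0\} \x T\dual} = -\id_{T\dual}$ pulls the trivial bundle back to the trivial bundle.

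For uniqueness, two isomorphisms $A^* \sP_\phi \to \sP_\phi\inv$ differ by a global section of $\O{T \x T\dual}^*$, which is a nonzero constant by compactness and connectedness of $T \x T\dual$. The fixed trivialization $\sP|_{(0,0)} \isom \C$ induces canonical trivializations of both $A^* \sP_\phi|_{(0,0)}$ and $\sP_\phi\inv|_{(0,0)}$ (all of $A$, $B$, $F$ fix the origin), and requiring the isomorphism to be the identity in these trivializations pins down the constant uniquely. The argument is essentially formal once see-saw is invoked; the only step requiring real care is to verify that the identification $T\dual \bij \Picn(T)$ is indeed a homomorphism of abelian groups, so that $(-\id_T)^* \sL_t \isom \sL_t\inv$ and $\sL_{-t} \isom \sL_t\inv$, and to keep track of how $A$ and $B$ restrict to the relevant slices.
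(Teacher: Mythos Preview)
Your proposal is correct and follows essentially the same approach as the paper: both arguments reduce to $\phi = \id_T$ via the commutativity of $(\phi,\id_{T\dual})$ with the two involutions, then verify the slice restrictions to $T \times \{t\}$ and $\{0\} \times T\dual$ and invoke uniqueness (you phrase it as see-saw, the paper as the uniqueness clause in the definition of the Poincar\'e bundle, which amounts to the same thing). The uniqueness argument via global units on a compact connected space is also identical.
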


\begin{proof}
The involution $-\id_T$ acts as $-\id$ on $\pi_1(T)$ and hence also on $\Picn(T)$.
Therefore, for all $t \in T\dual$ we have
\begin{align*}
(-\id_T, \id_{T\dual})^* \sP \big|_{T \x \{ t \}} & \isom (-\id_T)^* \sL_t \isom \sL_t\inv \qquad \text{and} \\
(\id_T, -\id_{T\dual})^* \sP \big|_{T \x \{ t \}} & \isom \sP \big|_{T \x \{ -t \}} \isom \sL_{-t} \isom \sL_t\inv,
\end{align*}
as well as
\begin{align*}
(-\id_T, \id_{T\dual})^* \sP \big|_{\{ 0 \} \x T\dual} & \isom \id_{T\dual}^* \O{T\dual} \isom \O{T\dual} \qquad \text{and} \\
(\id_T, -\id_{T\dual})^* \sP \big|_{\{ 0 \} \x T\dual} & \isom (-\id_{T\dual})^* \O{T\dual} \isom \O{T\dual}.
\end{align*}
This shows that $(-\id_T, \id_{T\dual})^* \sP\inv$ and $(\id_T, -\id_{T\dual})^* \sP\inv$ both have the defining properties of the Poincar\'e bundle.
By the uniqueness in~\labelcref{poincare}, we obtain the desired isomorphisms in case $\phi = \id_T$.
These isomorphisms will only be unique up to a constant.
But as $(0, 0)$ is a fixed point of both $(-\id_T, \id_{T\dual})$ and $(\id_T, -\id_{T\dual})$, there will be only one isomorphism of each kind respecting a fixed trivialization $\sP \big|_{(0,0)} \isom \C$.

For the general case, note that pulling back by the map $(\phi, \id_{T\dual})$ commutes with both $(-\id_T, \id_{T\dual})$ and $(\id_T, -\id_{T\dual})$, as $\phi$ is an endomorphism.
\end{proof}

Let $T$ be a complex torus of dimension $n \ge 2$ and equipped with an endomorphism $\phi$.
We consider the rank $2$ vector bundle $\sE_\phi \defn \sP_{\phi} \oplus \sP_{\phi}^{-1}$ and the $\P1$-bundle $p_\phi \from \bP(\sE_{\phi}) \to T \x T\dual$.
By \cref{poincare pullback}, the automorphisms $(-\id_T, \id_{T\dual})$ and $(\id_T, -\id_{T\dual})$ of $T \x T\dual$ induce automorphisms $i_\phi$ and $\hat i_\phi$ of $\bP(\sE_{\phi})$.
These automorphisms generate a finite group isomorphic to $\factor{\Z}{2\Z} \x \factor{\Z}{2\Z}$.
We consider the quotient
\begin{equation} \label{Qphi}
\sQ_\phi \defn \factor{\bP(\sE_\phi)}{\langle i_\phi, \hat i_\phi \rangle}.
\end{equation}
Using this notation, we can finally outline Voisin's example.

\begin{cons}[Voisin's example] \label{construction}
Let $(T, \phi)$ be a \flaccid torus of dimension $n \ge 4$.
We do the construction in the above paragraph for the given endomorphism $\phi$ and also for the endomorphism $\id_T$.
In the second case, for the sake of readability, we drop all the lower indices referring to $\id_T$.

The automorphisms $i$, $\hat i$, $i_\phi$ and $\hat i_\phi$ induce automorphisms $(i, i_\phi)$ and $(\hat i, \hat i_\phi)$ of the fibre product
\[ Z \defn \bP(\sE) \x_{T\x T\dual} \bP(\sE_{\phi}). \]
These automorphisms generate a finite group $G$, which is isomorphic to ${\factor{\Z}{2\Z} \x \factor{\Z}{2\Z}}$.
We denote the quotient by $X \defn \factor Z G$.
We get the following two commutative diagrams, where the second one is the quotient of the first one by the action of $G$:

\begin{center}
		\begin{tikzpicture}[scale=2]
		\node (A) at (0,1){$Z$};
		\node (B) at (1.5,1){$\bP(\sE_\phi)$};
		\node (C) at (0,0){$\bP(\sE)$};
		\node (D) at (1.5,0){$T\x T\dual$};
		
		\path[->,font=\scriptsize]
		(A) edge node[above]{$q_\phi$} (B)
		(A) edge node[left]{$q$} (C)
		(C) edge node[above]{$p$} (D)
		(B) edge node[right]{$p_\phi$} (D)
		(A) edge node[above]{$\pi$} (D);
		\end{tikzpicture}
                \hspace{5em}
		\begin{tikzpicture}[scale=2]
		\node (A) at (0,1){$X$};
		\node (B) at (1.5,1){$\sQ_\phi$};
		\node (C) at (0,0){$\sQ$};
		\node (D) at (1.5,0){$K(T)\x K(T\dual)$};
		
		\path[->,font=\scriptsize]
		(A) edge node[above]{} (B)
		(A) edge node[left]{} (C)
		(C) edge node[above]{} (D)
		(B) edge node[right]{} (D)
		(A) edge node[above]{} (D);
		\end{tikzpicture}
\end{center}
Here, $\sQ$ and $\sQ_\phi$ are as defined in~\labelcref{Qphi}.
\end{cons}

The interest in this construction stems from the following result of Voisin.

\begin{thm}[\protect{\cite[Theorem~4]{Voi06}}] \label{voi}
Let $X'$ be any compact complex manifold bimeromorphically equivalent to $X$.
Then $X'$ does not have the homotopy type of a complex projective manifold.
In particular, it does not admit an algebraic approximation. \qed
\end{thm}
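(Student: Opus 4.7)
The plan is to argue by contradiction, assuming $X'$ has the homotopy type of a smooth projective manifold $Y$. The ``in particular'' clause about algebraic approximations follows from the Ehresmann fibration theorem, which renders $X'$ diffeomorphic (hence homotopy equivalent) to any nearby projective fibre of such an approximation.

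The key is to extract an integer polynomial relation among the eigenvalues $\mu_1,\dots,\mu_{2n}$ of $\phi_*$ from the existence of a ``homotopy-theoretic K\"ahler class''. Choose a common smooth K\"ahler resolution $\widetilde W$ of $X$ and $X'$ via \cref{funct res}, with proper bimeromorphic projections $p \from \widetilde W \to X$ and $q \from \widetilde W \to X'$. A rational ample class on $Y$ transports, via the ring isomorphism $H^*(X',\Z) \isom H^*(Y,\Z)$, to a class $[\omega'] \in H^2(X',\Q)$ satisfying the topological consequences of being ample: positivity $\int_{X'}[\omega']^{\dim X'} > 0$ together with the Hard Lefschetz isomorphisms. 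Pulling back gives $\alpha \defeq q^*[\omega'] \in H^2(\widetilde W,\Q)$, and pushing forward to $X$ and lifting to the degree-$4$ quotient $Z \to X$ produces a $G$-invariant class $\eta \in H^2(Z,\Q)^G$, modulo contributions from classes in $H^*(\widetilde W,\Q)$ supported on the exceptional locus of $p$.

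The structure of $H^*(Z,\Q)$ is made explicit by the iterated projective bundle formula:
\[ H^*(Z,\Q) \isom H^*(T \x T\dual,\Q)[\xi,\xi_\phi] \big/ \big(\xi^2 - \ell^2,\; \xi_\phi^2 - \ell_\phi^2\big), \]
where $\ell = c_1(\sP)$ is the canonical duality element in $H^1(T,\Q) \tensor H^1(T\dual,\Q)$ and $\ell_\phi = (\phi,\id_{T\dual})^* \ell$ records the action of $\phi_*$ on $H^1(T,\C)$. A $G$-invariant lift $\eta$ decomposes as a $\Q$-linear combination of $\ell$, $\ell_\phi$, $\xi$, $\xi_\phi$ and classes from the rational N\'eron--Severi group of $T \x T\dual$. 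Evaluating top self-intersections of $\eta$ on the fundamental class of $Z$ reduces, via $\xi^2 = \ell^2$, $\xi_\phi^2 = \ell_\phi^2$ and Poincar\'e duality on $T \x T\dual$, to polynomials in the $\mu_i$ with integer coefficients. The positivity and Hard Lefschetz conditions inherited from $[\omega']$ then impose a nontrivial algebraic relation $P(\mu_1,\dots,\mu_{2n}) = 0$ that is \emph{not} invariant under $\frS_{2n}$: the asymmetry stems from singling out $\phi$ versus $\id_T$ in the two factors of the fibre product $Z$.

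The contradiction now follows from the Galois hypothesis. Since $\Gal\big(\Q(\mu_1,\dots,\mu_{2n})/\Q\big) = \frS_{2n}$, any $\Q$-polynomial identity satisfied by the eigenvalue tuple must be symmetric, ruling out asymmetric relations like $P = 0$. The principal obstacle is making the extraction of $P$ explicit: one must compute the intersection numbers of mixed products $\ell^a \cdot \ell_\phi^{2n-a}$ on $T \x T\dual$ as integer-valued polynomials in the $\mu_i$ via Poincar\'e duality, control the exceptional contributions from $p$ so that they do not absorb the asymmetric part of $\eta$, and verify that the resulting relation genuinely distinguishes $\phi$ from $\id_T$ beyond the reach of the Galois-symmetric identities.
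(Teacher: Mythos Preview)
The paper does not prove this statement; it is quoted as \cite[Theorem~4]{Voi06} and closed with \qed. So there is no argument in the paper to compare against --- the result is used as a black box.

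Your sketch is in the spirit of Voisin's strategy, and the projective--bundle description of $H^*(Z,\Q)$ is correct. But what you have written is a plan, not a proof, and you say so yourself: the ``principal obstacle'' you list in the last paragraph --- computing the mixed products $\ell^a \cdot \ell_\phi^{2n-a}$ explicitly, controlling the exceptional contributions under the bimeromorphic passage from $X'$ to $Z$, and checking that the resulting constraint is genuinely asymmetric in the $\mu_i$ --- is the entire substance of Voisin's paper. The second of these points in particular (stability of the relevant part of the cohomology algebra under arbitrary smooth bimeromorphic modification) is the main technical advance of \cite{Voi06} over \cite{Voi04}, and it is not a routine verification.

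There is also a conceptual wobble in your mechanism. You write that ``positivity and Hard Lefschetz conditions \dots\ impose a nontrivial algebraic relation $P(\mu_1,\dots,\mu_{2n}) = 0$''. But positivity of a top self-intersection is an \emph{inequality}, and Hard Lefschetz is a \emph{non-degeneracy} statement; neither hands you an equation $P=0$ directly. Voisin's route is rather to show that a class on $X'$ satisfying the Hodge--Riemann bilinear relations (which exists on any projective manifold) would force a rational polarization on a canonical subquotient of $H^*(X',\Q)$ that still remembers $\phi_*$, and it is the compatibility of such a polarization with the eigenspace decomposition of $\phi_*$ --- pairing eigenvalues in a way the Galois group cannot permute freely --- that yields the contradiction. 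Your formulation in terms of a single asymmetric polynomial identity does not quite capture this, and as written it is not clear how the contradiction would actually be reached.
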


\subsection{Local description of the singularities}

The aim of this subsection is to prove that $X$ has rigid singularities.
To this end, we examine the singularities arising in the above construction more closely.

\begin{lem}[Singularities of $\sQ$] \label{p1bundle}
The spaces $\sQ$ and $\sQ_\phi$ are $(2n + 1)$-dimensional, with only terminal quotient singularities of codimension $n+1$.
Locally analytically the singularities look like one of the following double points:
\begin{enumerate}
\item \label{p1bundle.1} $(\C^{n+1} / \pm) \x \C^n$, or
\item \label{p1bundle.2} $(\C^{2n} / \pm) \x \C$, or
\item \label{p1bundle.3} $\factor { (\C^n \x \C^n \x \C) }{ \big\langle (-\id, \id, -\id), (\id, -\id, -\id) \big\rangle }$.
\end{enumerate}
\end{lem}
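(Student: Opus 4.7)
The plan is to analyse the $G$-action on $Z \defn \bP(\sE)$ stratum by stratum, read off the local analytic model of each quotient singularity on $\sQ = Z/G$, and verify terminality via the Reid--Tai criterion. The dimension statement is immediate: $Z$ is a $\bP^1$-bundle over the smooth $2n$-manifold $T \x T\dual$, so both $Z$ and $\sQ$ are $(2n+1)$-dimensional, and $Z$ is smooth.

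The first step is to stratify $Z$ by isotropy. For a point $z \in Z$ lying over $(t_1, t_2) \in T \x T\dual$, the isotropy $G_z$ projects into the stabiliser of $(t_1, t_2)$ inside $\langle \sigma_1, \sigma_2 \rangle \isom \Z/2 \x \Z/2$, where $\sigma_1, \sigma_2$ are the base actions of $i, \hat i$. Hence $G_z$ can be non-trivial only when $t_1 \in \tau_2(T)$ or $t_2 \in \tau_2(T\dual)$, and three cases arise. When exactly one of $t_1, t_2$ is $2$-torsion, the stabiliser of $z$ is $\langle i \rangle$ or $\langle \hat i \rangle$; the relevant involution acts on $\bP(\sE)|_{(t_1,t_2)} \isom \bP^1$ via the swap of summands afforded by \cref{poincare pullback} and has exactly two fixed points. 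Picking local analytic coordinates along $T$, $T\dual$ and the $\bP^1$-fibre at such a fixed point, the action linearises after a standard change of variables to $(-\id_{\C^{n+1}}, \id_{\C^n})$, giving model~\labelcref{p1bundle.1}.

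The delicate case is $(t_1, t_2) \in \tau_2(T) \x \tau_2(T\dual)$, where the entire fibre over $(t_1, t_2)$ is $G$-stable. The key claim will be that $i \hat i$ acts trivially on this $\bP^1$-fibre. To see this, I compose the two isomorphisms of \cref{poincare pullback} to obtain a canonical isomorphism $(\sigma_1 \sigma_2)^* \sP \isom \sP$; taken together with its dual on $\sP\inv$, this realises $(\sigma_1\sigma_2)^* \sE \isom \sE$ as a diagonal automorphism $\mathrm{diag}(\gamma, \gamma\inv)$ on the fibre over $(t_1, t_2)$. Projectivising gives $\mathrm{diag}(1, \gamma^{-2})$ on $\bP^1$, and $(i \hat i)^2 = \id$ in $G$ forces $\gamma^2 = 1$, so the induced action on the $\bP^1$-fibre is trivial. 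Granting this, $i$ and $\hat i$ induce the same involution on the fibre and share two fixed points. Away from those fixed points the isotropy is $\langle i \hat i \rangle$ acting by $(-\id, -\id, +\id)$ on base-$T$, base-$T\dual$ and fibre coordinates, yielding model~\labelcref{p1bundle.2}; at the two fixed points the isotropy is all of $G$, and its action on the tangent space is precisely the one in model~\labelcref{p1bundle.3}.

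The main obstacle is the triviality of the $i \hat i$-action in the previous paragraph: without it, a fourth local model $\C^{2n+1}/\pm$ with an isolated singularity of codimension $2n+1$ would appear, incompatible with the statement. Once the three models are in place, terminality is a direct application of the Reid--Tai criterion: for a single involution acting by $-\id$ on $\C^d$ the quotient is terminal precisely when $d \ge 3$, which holds for models~\labelcref{p1bundle.1} and~\labelcref{p1bundle.2} since $n \ge 4$; and for model~\labelcref{p1bundle.3} the three non-trivial elements of $\Z/2 \x \Z/2$ have ages $\tfrac{n+1}{2}$, $\tfrac{n+1}{2}$ and $n$, each strictly greater than $1$. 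The same computations apply verbatim to $\sQ_\phi$.
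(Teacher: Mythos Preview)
Your overall architecture matches the paper's: stratify $\bP(\sE)$ by isotropy, read off the three local models, and invoke Reid--Tai. The gap is precisely at the point you yourself flag as ``the main obstacle''. From $(i\hat i)^2 = \id$ in $G \subset \Aut\big(\bP(\sE)\big)$ you only get that $\mathrm{diag}(1,\gamma^{-2})$ has order dividing $2$ as a \emph{projective} transformation, i.e.\ $\gamma^{4}=1$; this does not exclude $\gamma^{2}=-1$. In that case $i\hat i$ would act on the fibre as $[a:b]\mapsto[a:-b]$, with isolated fixed points $[1:0],[0:1]$, and you would indeed pick up the unwanted fourth model $\C^{2n+1}/\pm$. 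So the deduction ``$(i\hat i)^2=\id$ forces $\gamma^2=1$'' is not valid as written.

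The paper avoids this by choosing the local trivialisation of $\sP^{-1}$ via the canonical isomorphism of \cref{poincare pullback} for $\hat i$, and then computing directly that in these coordinates both $i$ and $\hat i$ act on the fibre by the swap $[a:b]\mapsto[b:a]$; their composition is then visibly trivial. If you want to salvage your more abstract line, argue globally rather than fibrewise: the lifted automorphism $(I\hat I)^2$ of $\sE$ covers the identity of $T\times T\dual$ and preserves the summand $\sP$, hence is multiplication by a global constant on $\sP$; by the normalisation at $(0,0)$ in \cref{poincare pullback} that constant is $1$, whence $\gamma(t_1,t_2)^2=1$ at every two-torsion point. Either route closes the gap; the rest of your proof is correct.
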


\begin{proof}
The variety $\sQ_\phi$ is smooth except possibly for the image of points $x \in \bP(\sE_\phi)$ with non-trivial stabilizer.
Let $x$ be such a point and denote the fibre containing it by $F \defn p_\phi\inv \big( p_\phi(x) \big)$.
Then $p_\phi(x) \in T \x T\dual$ is a fixed point of $(-\id_T, \id_{T\dual})$, $(\id_T, -\id_{T\dual})$ or $(-\id_T, -\id_{T\dual})$.
This means $p_\phi(x) \in \tau_2(T) \x T\dual \cup T \x \tau_2(T\dual)$.

Let $\psi_1 \from U \x \C \to \sP_\phi$ be a trivialization of $\sP_{\phi}$ near $F$, where we may assume $U \subset T \x T\dual$ to be a symmetric neighbourhood of $p_{\phi}(x)$.
Consider the map $(\id_T, -\id_{T\dual}) \from U \to U$.
By \cref{poincare pullback}, there is an isomorphism $\sP_\phi\big|_U \x_U U \isom \sP_\phi\inv\big|_U$.
Using this, we obtain trivializations
\[ \begin{array}{rlcl}
\psi_2 \from & U \x \C & \to & \sP_\phi\inv\big|_U = \sP_\phi\big|_U \x_U U, \\[1ex]
& (u, t) & \mapsto & \big( \psi_1 \big( ( \id_T, -\id_{T\dual} )(u), \ t \big), \ u \big), \quad \text{and} \\[1ex]
\psi \from & U \x \C^2 & \to & \sE_\phi\big|_U, \\[1ex]
& \big( u, \ (a, b) \big) & \mapsto & \big( \psi_1(u, a), \ \psi_2(u, b) \big),
\end{array}
\]
of $\sP_{\phi}\inv\big|_U$ and $\sE_\phi\big|_U$, respectively.
Projectivizing gives a trivialization
\[ \bP(\psi) \from U \x \P1 \bij \bP(\sE_\phi)\big|_U. \]
In these coordinates the automorphisms $i_\phi, \hat{i}_\phi$ and their composition are given as
\[ \begin{array}{rlcl}
i_\phi \from & \big( u, [a : b] \big) & \mapsto & \big( ( -\id_T, \id_{T\dual} )(u), \ [b : a] \big), \\[1ex]
\hat i_\phi \from & \big( u, [a : b] \big) & \mapsto & \big( ( \id_T, -\id_{T\dual} )(u), \ [b : a] \big), \qquad \text{and} \\[1ex]
i_\phi \circ \hat i_\phi \from & \big( u, [a : b] \big) & \mapsto & \big( ( -\id_T, -\id_{T\dual} )(u), \ [a : b] \big).
\end{array}
\]
Their fixed point sets are precisely
\begin{align*}
\operatorname{Fix}(i_\phi) & = \big( U \cap ( \tau_2(T) \x T\dual ) \big) \x \big\{ [\pm 1 : 1] \big\}, \\
\operatorname{Fix}(\hat i_\phi) & = \big( U \cap ( T \x \tau_2(T\dual) ) \big) \x \big\{ [\pm 1 : 1] \big\}, \quad \text{and} \\
\operatorname{Fix}(i_\phi \circ \hat i_\phi) & = \big( U \cap ( \tau_2(T) \x \tau_2(T\dual) ) \big) \x \P1.
\end{align*}
Now, if $x$ is a fixed point of exactly one of $i_\phi$, $\hat{i}_\phi$, $i_\phi \circ \hat{i}_\phi$, then the above description in coordinates shows that locally at $x$, the quotient $\sQ_\phi$ looks like~\labelcref{p1bundle.1} or~\labelcref{p1bundle.2}, respectively.
Otherwise $x$ is a common fixed point of all three automorphisms and we get the local description~\labelcref{p1bundle.3}.
All these singularities are terminal by the Reid--Tai criterion~\cite[Theorem~3.21]{Kol13}.
To be more precise, in our situation that criterion boils down to having the eigenvalue $-1$ with multiplicity $\ge 3$ in every non-identity element of $G$, and this is clearly satisfied.
\end{proof}

\begin{lem}[Singularities of $X$] \label{sing}
The space $X$ is $(2n + 2)$-dimensional, with only terminal quotient singularities of codimension $n+1$.
Locally analytically the singularities look like one of the following double points:
\begin{enumerate}
\item \label{sing.1} $(\C^{n+2} / \pm) \x \C^n$, or
\item \label{sing.2} $(\C^{2n} / \pm) \x \C^2$, or
\item \label{sing.3} $\factor { (\C^n \x \C^n \x \C^2) }{ \big\langle (-\id, \id, -\id), (\id, -\id, -\id) \big\rangle }$.
\end{enumerate}
\end{lem}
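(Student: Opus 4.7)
My plan is to mimic the proof of \cref{p1bundle} on the fibre product $Z = \bP(\sE) \x_{T \x T\dual} \bP(\sE_\phi)$, now tracking the action of the full group $G \cong \factor{\Z}{2\Z} \x \factor{\Z}{2\Z}$. The dimension statement is immediate: since $Z$ is a $\P1 \x \P1$-bundle over the $2n$-dimensional base $T \x T\dual$, we have $\dim Z = 2n + 2$, and the finite quotient $X = \factor{Z}{G}$ preserves dimension.

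For the local description, I would start at a point $z \in Z$ with nontrivial stabilizer in $G$, set $p \defn \pi(z) \in T \x T\dual$, and pick a symmetric open neighbourhood $U \subset T \x T\dual$ of $p$. Choosing trivializations of the line bundles $\sP$ and $\sP_\phi$ over $U$ and invoking \cref{poincare pullback} separately for $\id_T$ and for $\phi$, I would build a chart $U \x \P1 \x \P1 \bij Z|_U$ in which the generators of $G$ act exactly as in the proof of \cref{p1bundle}:
\begin{align*}
(i, i_\phi) & \colon \bigl( u, [a:b], [c:d] \bigr) \mapsto \bigl( ( -\id_T, \id_{T\dual})(u),\ [b:a],\ [d:c] \bigr), \\
(\hat i, \hat i_\phi) & \colon \bigl( u, [a:b], [c:d] \bigr) \mapsto \bigl( ( \id_T, -\id_{T\dual})(u),\ [b:a],\ [d:c] \bigr).
\end{align*}
Their composition will then act as $(-\id_T, -\id_{T\dual})$ on $U$ and as the identity on each $\P1$ factor.

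With this chart in hand, the proof splits according to the stabilizer $G_z \le G$. If $G_z$ is generated by one of $(i, i_\phi)$ or $(\hat i, \hat i_\phi)$, linearizing the flip $[a:b] \mapsto [b:a]$ around its fixed points $[\pm 1 : 1]$ shows that the nontrivial generator negates exactly $n$ base coordinates together with both fibre coordinates, producing the local model~\labelcref{sing.1}. If $G_z = \langle (i\hat i, i_\phi \hat i_\phi) \rangle$, the generator negates all $2n$ base coordinates and acts trivially on the fibre, yielding~\labelcref{sing.2}. Finally, when $G_z = G$---equivalently, $p \in \tau_2(T) \x \tau_2(T\dual)$ and the fibre point lies in $\{ [\pm 1 : 1] \}^2$---both generators act with the signs displayed in~\labelcref{sing.3}. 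The claimed codimension of the singular locus can then be read off from these three explicit models.

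Terminality in all three cases will follow at once from the Reid--Tai criterion, precisely as in \cref{p1bundle}: every nontrivial element of the relevant stabilizer has eigenvalue $-1$ with multiplicity at least $n + 2 \ge 6$ (or $2n$ in the middle case), so its age exceeds $1$. The only mildly delicate point is arranging the two trivializations of $\sP$ and $\sP_\phi$ compatibly, so that both generators of $G$ act in the stated form simultaneously; this is handled by choosing $U$ symmetric and applying \cref{poincare pullback} once for $\id_T$ and once for $\phi$.
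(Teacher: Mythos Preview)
Your proposal is correct and follows essentially the same approach as the paper's proof, which simply says the local analysis is ``analogous to \cref{p1bundle}'' and invokes Reid--Tai; you have spelled out that analogy in detail, with the correct case split by stabilizer and the correct local models. The only extra content in the paper is a one-line global description of the singular locus (a section over $\tau_2(T) \times T\dual \cup T \times \tau_2(T\dual)$, plus the fibres over $\tau_2(T) \times \tau_2(T\dual)$), which is immediate from your fixed-point analysis.
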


\begin{proof}
The space $Z$ is a $\P1 \x \P1$-bundle over $T \x T\dual$.
As $G$ is finite, the quotient ${X = \factor Z G}$ is also a $(2n+2)$-dimensional complex space with only quotient singularities, contained in the image of the fixed point set of the automorphisms $g \in G \setminus \{ \id \}$.
The action of these $g$ can be described in local analytic coordinates, analougously to \cref{p1bundle}.
This gives the above local analytic description of the singularities, and the Reid--Tai criterion shows again that they are terminal.
The singular locus consists of a section over $\tau_2(T) \x T\dual \cup T \x \tau_2(T\dual)$, together with the fibres over $\tau_2(T) \x \tau_2(T\dual)$.
\end{proof}

\begin{cor}[Local rigidity] \label{rigid}
$X$ has rigid singularities.
\end{cor}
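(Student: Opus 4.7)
\begin{prf}
The plan is to show that the sheaf $T^1_X$ of first-order infinitesimal deformations vanishes identically on $X$; by standard deformation theory this is equivalent to the condition in \cref{def rigid sing} that every deformation of $X$ be locally trivial. Since $T^1_X$ is coherent and supported on the singular locus, the question reduces to a germ-wise check, and by \cref{sing} every singular germ of $X$ is analytically isomorphic to one of three model quotient germs $(\C^N/G,0)$, in each of which $G$ is a small finite subgroup of $\operatorname{GL}(N,\C)$ and (using $n \geq 4$) every non-identity element of $G$ has fixed locus of codimension at least $3$ in $\C^N$.

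For germs of types~\labelcref{sing.1} and~\labelcref{sing.2} I would exploit the splitting $(\C^m/\pm) \times (\C^k,0)$ with $m \geq 6$. The first factor is an isolated cyclic quotient singularity of dimension $\geq 3$ and is therefore rigid by Schlessinger's classical rigidity theorem for isolated quotient singularities. The product with the smooth factor $(\C^k,0)$ preserves the vanishing of $T^1$ (since $\Omega^1$ splits as a direct sum and flat base change applies to $\sExt1.\Omega^1.\O{}.$), so these two germ types contribute nothing to $T^1_X$.

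The germ of type~\labelcref{sing.3}, namely $(\C^{2n+2}/G,0)$ with $G \cong \Z/2 \x \Z/2$, is non-isolated and Schlessinger does not apply directly. My key observation is that at every point of the singular locus of this germ other than the origin, the transverse singularity type is one of~\labelcref{sing.1} or~\labelcref{sing.2}, hence already rigid by what we just established; therefore $T^1$ of this germ is supported at the origin alone. To show it vanishes there too, I would invoke the equivariant deformation principle: for a finite group acting linearly on a smooth germ without pseudo-reflections and with all fixed loci of codimension $\geq 3$, deformations of the quotient germ correspond bijectively to $G$-equivariant deformations of the cover. Since $(\C^{2n+2},0)$ is a trivially rigid smooth germ and the given finite $\Z/2 \x \Z/2$-representation admits no non-trivial deformations, all such equivariant deformations are trivial, yielding $T^1=0$ at the origin as well.

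The step I expect to be the most delicate is the rigidity of the type~\labelcref{sing.3} germ, since it lies outside the original scope of Schlessinger's statement. Either the equivariant correspondence must be justified carefully in the non-isolated but codimension $\geq 3$ setting, or $T^1_{V/G}$ must be computed directly via a spectral sequence relating it to group cohomology $H^\bullet(G, T_V)$; in the latter approach the codimension $\geq 3$ hypothesis is precisely what forces the relevant obstruction groups to vanish.
\end{prf}
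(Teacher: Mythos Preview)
Your outline is correct, but it is a considerably longer route than the one the paper takes. The paper's proof is a single sentence: by \cref{sing}, every singularity of $X$ is a quotient singularity whose singular locus has codimension $n+1 \ge 5$, and such singularities are rigid by a general result in Stevens' \emph{Deformations of singularities} (codimension $\ge 3$ suffices). No case distinction is made.

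What you are doing, in effect, is reproving that cited result in the three special cases at hand. For types~\labelcref{sing.1} and~\labelcref{sing.2} this works cleanly via Schlessinger plus the product decomposition, as you say. But for type~\labelcref{sing.3} you yourself flag that Schlessinger does not apply, and the ``equivariant deformation principle'' you invoke --- that deformations of $V/G$ correspond to $G$-equivariant deformations of the smooth $V$ whenever all fixed loci have codimension $\ge 3$ --- is precisely the content of the general rigidity theorem for non-isolated quotient singularities. So your hardest step is not really easier than citing the general result, and the preliminary reduction (showing $T^1$ is supported at the origin) becomes redundant once that principle is in hand. Your approach buys a more self-contained argument at the cost of length; the paper's buys brevity at the cost of a black-box citation.
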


\begin{proof}
According to \cref{sing}, the variety $X$ has only quotient singularities of codimension $n + 1 \ge 5$.
Such singularities are rigid by~\cite[p.~72]{Ste03}. (Actually it suffices that the codimension is $\ge 3$.)
\end{proof}

\subsection{The topology of $X$}

The result of this last subsection says that the obstruction to algebraic approximability is not contained in the fundamental group of $X$.

\begin{prp}[Fundamental group of $X$] \label{pi1X}
The space $X$ is simply connected.
\end{prp}

\begin{proof}
To begin with, a resolution of a Kummer variety is simply connected by~\cite[Thm.~1]{Spanier56}.
By~\cite[Cor.~1.1(1)]{Tak03}, also the Kummer varieties $K(T)$ and $K(T\dual)$ themselves are simply connected, so we get that $\pi_1 \big( K(T) \x K(T\dual) \big) = 1$.
Now, the natural map $X \to K(T) \x K(T\dual)$ is a fibration with general fibre $\P1 \x \P1$, which is again simply connected.
By construction, this fibration is in fact a (locally trivial) bundle over the smooth locus of $K(T) \x K(T\dual)$.
In particular, the set of points over which the fibres are everywhere non-reduced has codimension $\ge 2$.
We conclude by~\cite[Lemma~1.5.C]{Nor83} that $\pi_1(X) = 1$.
\end{proof}

\section{$X$ does not admit an algebraic approximation}

In this section, we prove~\labelcref{main.approx}:

\begin{thm} \label{approximation}
The space $X$ from \cref{construction} does not admit an algebraic approximation.
\end{thm}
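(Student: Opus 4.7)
The plan is to argue by contradiction and reduce to \cref{voi}. Suppose $\pi \from \frX \to (S, 0)$ is an algebraic approximation of $X$. I aim to promote $\pi$ to an algebraic approximation of a smooth compact complex manifold bimeromorphic to $X$, which is forbidden by \cref{voi}. The two key ingredients are the local rigidity of $X$ established in \cref{rigid} and the functorial resolution of \cref{funct res}.

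Since $X$ has rigid singularities, the deformation $\pi$ is automatically locally trivial. After shrinking $S$, cover $\frX$ by open sets $U_\alpha$ with isomorphisms $U_\alpha \isom V_\alpha \x S$ over $S$, where each $V_\alpha \subset X$ is open. Set $\wt X \defn \cR(X)$ and apply the resolution functor globally to $\frX$. The projection $V_\alpha \x S \to V_\alpha$ is a smooth morphism, so \cref{funct res} yields
\[ \cR(U_\alpha) \isom \cR(V_\alpha) \x_{V_\alpha} (V_\alpha \x S) \isom \cR(V_\alpha) \x S. \]
Patching these trivializations shows that $\cR(\frX) \to S$ is itself a locally trivial family whose central fibre is $\wt X$, and in particular it is flat and proper over $S$, hence a deformation of $\wt X$ in the sense of \cref{sec def theory}.

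Now the morphism $\cR(\frX) \to \frX$ is projective by \cref{funct res}, so its base change $\cR(\frX)_s \to \frX_s$ is projective for every $s \in S$. Whenever $\frX_s$ is projective, it follows that $\cR(\frX)_s$ is projective as well, since a projective morphism to a projective target has projective source. Thus the set $S^{\mathrm{alg}}$ for $\cR(\frX) \to S$ contains the set $S^{\mathrm{alg}}$ for $\pi$, and in particular $0 \in \overline{S^{\mathrm{alg}}}$ holds for the former. This produces an algebraic approximation of the smooth compact complex manifold $\wt X = \cR(X)$, which is bimeromorphic to $X$, contradicting \cref{voi}.

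The main obstacle is the compatibility of the resolution with the family structure; this is precisely where local rigidity and the functorial resolution lock together. Local triviality reduces the global resolution problem to a smooth base change, which is exactly the situation \cref{funct res} was designed to handle. Without the rigidity input from \cref{rigid}, one would face the significantly more delicate problem of simultaneously resolving a non-trivial family of singular Kähler spaces.
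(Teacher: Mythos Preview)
Your argument is essentially the paper's own proof: invoke \cref{rigid} to get local triviality, apply the functorial resolution from \cref{funct res} to obtain a deformation of a smooth model $\wt X$ bimeromorphic to $X$, and conclude via \cref{voi}. The paper packages the middle step as a separate lemma about resolving locally trivial deformations, while you carry out the same computation inline; the logical content is identical.

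One small technical omission: when you claim that the projection $V_\alpha \times S \to V_\alpha$ is a smooth morphism (so that \cref{funct res} applies), this needs $S$ itself to be smooth. The paper deals with this by first pulling the deformation back along a resolution of $S$; you should insert the same reduction at the outset. It is harmless, since pulling an algebraic approximation back along a surjective map of germs still yields an algebraic approximation.
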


We begin with an auxiliary lemma.
It says that for locally trivial deformations, the functorial resolution of the total space is a deformation of a resolution of the central fibre.
This obviously fails if local triviality is dropped (consider e.g.~a deformation $f \from \sX \to (S, 0)$ where $\sX$ is smooth but $f\inv(0)$ is not).

\begin{lem}[Resolving locally trivial deformations] \label{ltresolution}
Let $f \from \sX \to S$ be a locally trivial deformation of a compact complex space $X \isom \sX_0$ over a smooth base $S$, and ${\pi_\sX \from \cR(\sX) \to \sX}$ the functorial resolution of $\sX$, as in \cref{funct res}.
Then, after shrinking $S$ around~$0$, the composition $f \circ \pi_\sX \from \cR(\sX) \to S$ is a locally trivial deformation of its central fibre.
Furthermore, that central fibre is a resolution of $X$.
\end{lem}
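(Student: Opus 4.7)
\begin{prf}
The plan is to reduce the statement to a purely local one and then invoke the compatibility of $\cR$ with smooth morphisms (\cref{funct res}) in two instances: once for open immersions, once for the projection from a product with a smooth factor.

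By the local triviality hypothesis, every point of $\sX_0$ has an open neighbourhood $U \subset \sX$ fitting into a trivialization $U \cong U^{(0)} \x S^\circ$, where $U^{(0)} \defn U \cap \sX_0$ and $f|_U$ corresponds to the second projection. Since $\sX_0 \cong X$ is compact, finitely many such $U_i$ cover $\sX_0$, and shrinking $S$ to the intersection of the corresponding $S_i^\circ$ one may assume every trivialization is defined over $S$ itself.

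Next, apply \cref{funct res} twice on each $U_i$. First, the open immersion $U_i \inj \sX$ is smooth, so $\cR(U_i) = \pi_\sX\inv(U_i)$. Second, since $S$ is smooth, the first projection $p \from U_i^{(0)} \x S \to U_i^{(0)}$ is a smooth morphism, and functoriality yields
\[ \cR(U_i^{(0)} \x S) \cong \cR(U_i^{(0)}) \x_{U_i^{(0)}} (U_i^{(0)} \x S) \cong \cR(U_i^{(0)}) \x S. \]
Combining the two, the trivialization $U_i \cong U_i^{(0)} \x S$ lifts to an isomorphism $\pi_\sX\inv(U_i) \cong \cR(U_i^{(0)}) \x S$ under which $f \circ \pi_\sX$ becomes the second projection. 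This exhibits $f \circ \pi_\sX$ as locally trivial over $S$, with central fibre locally identified with $\cR(U_i^{(0)})$. Applying functoriality once more to the open immersion $U_i^{(0)} \inj \sX_0$, these local central fibres are the restrictions of $\cR(\sX_0) \cong \cR(X)$, which is a resolution of $X$ and hence gives the last assertion.

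The only step that genuinely requires care is checking compatibility of the local trivializations of $\cR(\sX)$ on overlaps $U_i \cap U_j$. But this compatibility is automatic from the uniqueness clause in \cref{funct res}: on $U_i \cap U_j$ both descriptions realize $\cR(U_i \cap U_j)$ as the fibre product with $\cR(\sX)$ over $\sX$, and any two such realizations are canonically identified. The whole argument is therefore essentially a bookkeeping exercise around the functoriality axioms of $\cR$, with compactness of $X$ the only reason one has to shrink $S$ at all.
\end{prf}
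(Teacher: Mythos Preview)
Your argument is correct and follows essentially the same route as the paper's proof: shrink $S$ by compactness of $X$, then on each local chart apply functoriality of $\cR$ once to the open immersion into $\sX$ and once to the smooth projection $U_i^{(0)} \times S \to U_i^{(0)}$ to obtain the product decomposition $\pi_\sX^{-1}(U_i) \cong \cR(U_i^{(0)}) \times S$. Your write-up is slightly more elaborate than the paper's in that you discuss overlap compatibility and identify the central fibre with $\cR(X)$ itself rather than merely ``a resolution of $X$''; the paper is content to observe that the central fibre is smooth and maps properly and bimeromorphically to $X$ via $\pi_\sX$, which already suffices for the statement as phrased.
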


\begin{proof}
As the deformation $f$ is locally trivial, for every point $x \in X$ there are open neighbourhoods $0 \in S_x \subset S$ and $x \in U_x \subset \sX$ such that $U_x$ is isomorphic to ${(U_x \cap X) \x S_x}$ over $S$.
As the fibres of $f$ are compact, after shrinking $S$ we can assume $S_x = S$ for all $x \in X$.

Let $x \in X$ and $U \defn U_x \cap X$.
The projection $U \x S \to U$ and the open embedding $U \x S \inj \sX$ are smooth.
Hence we get for the functorial resolutions
\[ \cR(U \x S) = \cR(U) \x_U (U \x S) = \cR(U) \x S \]
and that $\cR(U\x S) \inj \cR(\sX)$ is also an open embedding.
By definition it follows that $\cR(\sX) \to S$ is a locally trivial deformation.
It is also clear that the central fibre has to be smooth.
Hence it is a resolution of $X$, via the restriction of $\pi_\sX$.
\end{proof}

\begin{proof}[Proof of \cref{approximation}]
Let $f \from \sX \to S$ be an arbitrary deformation of $X \isom \sX_0$.
Pulling back the deformation to a resolution of $S$, we may assume that $S$ is smooth.
As $X$ has rigid singularities by \cref{rigid}, the deformation $f$ is locally trivial.
After shrinking $S$, the map $\cR(\sX) \to S$ is a deformation of some resolution $\wt X$ of $X$, by \cref{ltresolution}.
According to \cref{voi}, no fibre of $\cR(\sX) \to S$ can be projective.
Then the same holds for the fibres of $\sX \to S$, because the functorial resolution is a projective morphism.
Therefore $f$ is not an algebraic approximation of $X$.
\end{proof}

\section{$\sQ$ does admit an algebraic approximation}

Keeping notation from \cref{construction}, in this section we will prove a substantial part of~\labelcref{main.mori}.

\begin{thm} \label{admitsapprox}
The space $\sQ$ admits an algebraic approximation.
\end{thm}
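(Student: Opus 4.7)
The plan is to produce an algebraic approximation of $\sQ$ by deforming the complex torus $T$ to nearby projective abelian varieties and then running the construction of \cref{construction} in families. Observe that $\sQ = \sQ_{\id_T}$ does not involve the endomorphism~$\phi$, so there is no obstruction to deforming $T$ freely.

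First I would invoke the classical algebraic approximation of complex tori: writing $T = V/\Lambda$ with $\Lambda$ a lattice of rank~$2n$, the complex structures on $\Lambda \tensor \R$ are parametrised by an open subset of a Grassmannian, on which the locus of abelian varieties (those carrying a Riemann form compatible with the complex structure) is dense. This yields, over a smooth and contractible base $S$, a deformation $\pi \from \mathfrak T \to S$ of $T$ with $0 \in \overline{S^{\mathrm{alg}}}$.

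Next I would carry out the construction of \cref{construction} relative to~$S$. Form the relative dual torus $\pi\dual \from \mathfrak T\dual \to S$, for instance as $\mathrm R^1\pi_*\O{\mathfrak T}\,/\,\mathrm R^1\pi_*\Z_{\mathfrak T}$; since $S$ is contractible, this is a smooth family of tori with $(\mathfrak T\dual)_s \isom (\mathfrak T_s)\dual$. On $\mathfrak T \x_S \mathfrak T\dual$ construct a relative Poincar\'e bundle $\sP$, characterised fibrewise as in \cref{voisin construction} and trivialised along $\{0\} \x_S \mathfrak T\dual$; its existence is handled by the analytic theory of relative $\Picn$. Set $\sE \defn \sP \oplus \sP\inv$ and form the $\P1$-bundle $\bP(\sE) \to \mathfrak T \x_S \mathfrak T\dual$. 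By the obvious relative version of \cref{poincare pullback}, the fibrewise involutions $-\id$ on $\mathfrak T$ and on $\mathfrak T\dual$ lift to involutions $i, \hat i$ of $\bP(\sE)$ over~$S$, generating a $(\factor\Z{2\Z})^2$-action; the quotient $\sQ_S \defn \bP(\sE)/\langle i, \hat i\rangle$ is then flat over $S$ (quotients by finite groups in characteristic zero preserve flatness) with central fibre $\sQ$.

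Finally I would verify that $\sQ_S \to S$ is an algebraic approximation of $\sQ$. For any $s \in S^{\mathrm{alg}}$, the fibre $\mathfrak T_s$ is an abelian variety, hence so is $\mathfrak T_s\dual$; the Poincar\'e bundle $\sP_s$ and the rank-two bundle $\sE_s$ are algebraic; $\bP(\sE_s)$ is a smooth projective variety; and its quotient by the finite group action is again projective. Thus $(\sQ_S)_s$ is projective for every $s \in S^{\mathrm{alg}}$, which exhibits $\sQ_S \to S$ as an algebraic approximation of $\sQ$. I expect the main technical work to lie in the careful construction of the relative Poincar\'e bundle and in the compatible lift of the two involutions to $\bP(\sE)$ in families; both become routine once $S$ has been shrunk to a contractible neighbourhood of~$0$.
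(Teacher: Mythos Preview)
Your proposal is correct and follows essentially the same route as the paper: deform $T$ to nearby abelian varieties, build the dual family and a relative Poincar\'e bundle on $\mathfrak T \times_S \mathfrak T\dual$, form $\mathbb P(\sP \oplus \sP^{-1})$, extend the $(\Z/2\Z)^2$-action and take the quotient, then check projectivity fibrewise. The paper's only extra content is making precise what you call ``the analytic theory of relative $\Picn$'': it identifies $\mathrm c_1(\sP)$ with the identity of $\Lambda$ under the K\"unneth map, observes this class stays of type $(1,1)$ on every fibre, and then lifts it to a line bundle via the exponential sequence and the Leray spectral sequence over a Stein contractible base---exactly the ``main technical work'' you anticipated.
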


An approximation of $\sQ$ will be constructed out of an approximation of $T$, which is well-known to exist.
To this end, we will show that the construction of $\sQ$ can be done in families.

\subsection{The Poincar\'e bundle in families}

We show in this auxiliary section that for a family $\sX$ of complex tori, the Poincar\'e bundles belonging to the fibres $\sX_s$ locally glue together to a line bundle on the total space of the induced family $(\sX_s \x \sX\dual_s)_s$.

\begin{prp}[Deformations of the Poincar\'e bundle] \label{deformpoincare}
Let $\pi \from \sX \to S$ be a deformation of a complex torus $T \isom \sX_0$.
Then:
\begin{enumerate}
\item \label{dualfamily} Each fibre $\sX_s$ is a complex torus, and there is a deformation $p \from \sY \to S$ with fibres $\sY_s = \sX_s \times \sX_s\dual$.
\item \label{relpoincare} After shrinking $S$ around $0$, there is a line bundle $\sL$ on $\sY$ whose restriction $\sL_s \defn \sL\big|_{\sY_s}$ is isomorphic to $\sP_s$, the Poincar\'e bundle on $\sY_s$, for each $s \in S$.
\end{enumerate}
\end{prp}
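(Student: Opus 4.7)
The plan for \labelcref{dualfamily} is the following. First I would show that after shrinking $S$, each fibre $\sX_s$ is again a complex torus. The central fibre $T = \sX_0$ has trivial tangent bundle, and $h^0(\sX_s, \T{\sX_s})$ is constant in families of deformations of $T$ (for instance by Kodaira's description of the Kuranishi space of $T$, or by semicontinuity combined with the fact that $\chi(\T{\sX_s}) = 0$ leaves no room for jumping). Hence every $\sX_s$ has trivial tangent bundle, and a compact complex manifold with trivial tangent bundle diffeomorphic to a torus is a complex torus. For the dual family, after shrinking $S$ to a simply connected Stein neighbourhood, $R^1\pi_*\Z_\sX$ is the constant local system $\Z_S^{2n}$ and $R^1\pi_*\sO_\sX$ is a rank-$n$ holomorphic vector bundle on $S$ (all Hodge numbers are constant in families of tori, so base change applies). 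The natural embedding of $R^1\pi_*\Z_\sX$ as a lattice subsheaf of $R^1\pi_*\sO_\sX$ gives, after quotienting the total space, the desired family $\sX\dual \to S$ of dual tori; I set $\sY \defn \sX \x_S \sX\dual$.

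For \labelcref{relpoincare}, I would shrink $S$ to a contractible Stein neighbourhood, so that Cartan~B yields $\HH q.\sY.\sF. = \Hnought S.R^q p_* \sF.$ for every coherent $\sF$ on $\sY$, and so that $R^2 p_* \Z_\sY$ is a constant local system. The fibrewise Chern classes $c_1(\sP_s) \in \HH 2.\sY_s.\Z.$ form a flat section of that local system (they originate from the canonical Poincar\'e pairing $\Hh 1.\sX_s.\Z. \tensor \Hh 1.\sX_s\dual.\Z. \to \Z$) and hence globalise to a class $\alpha \in \HH 2.\sY.\Z.$. The obstruction to lifting $\alpha$ to $\HH 1.\sY.\sO_\sY^*.$ via the exponential sequence on $\sY$ is its image in $\HH 2.\sY.\sO_\sY. = \Hnought S.R^2 p_* \sO_\sY.$; this image is a holomorphic section of the locally free sheaf $R^2 p_* \sO_\sY$ whose value at each $s \in S$ is the image of $c_1(\sP_s)$ in $\HH 2.\sY_s.\sO_{\sY_s}.$, and the latter vanishes because $\sP_s$ is an actual line bundle. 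A section of a locally free sheaf that vanishes at every point is zero, so $\alpha$ lifts and produces a line bundle $\sL'$ on $\sY$ satisfying $c_1(\sL'|_{\sY_s}) = c_1(\sP_s)$ for every $s$.

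It remains to normalise $\sL'$ so that $\sL'|_{\sY_s} \isom \sP_s$ on the nose. Let $i_1 \from \sX \inj \sY$ and $i_2 \from \sX\dual \inj \sY$ be the two zero-section embeddings and $\pi_1, \pi_2$ the two projections from $\sY$, and set $\sN_2 \defn i_1^* \sL'$, $\sN_1 \defn i_2^* \sL'$. I would then define $\sL \defn \sL' \tensor \pi_1^* \sN_2\inv \tensor \pi_2^* \sN_1\inv$ and shrink $S$ once more so that $\Pic(S) = 0$. The goal is to conclude that both $i_1^* \sL$ and $i_2^* \sL$ are trivial, from which the standard uniqueness of the Poincar\'e bundle on each fibre $\sY_s$ (a consequence of the K\"unneth decomposition of $\Pic(\sX_s \x \sX_s\dual)$ into a $\Pic(\sX_s)$-, $\Pic(\sX_s\dual)$- and $\mathrm{Hom}(\sX_s, \sX_s\dual)$-summand) yields $\sL|_{\sY_s} \isom \sP_s$. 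The main obstacle I expect is this last bookkeeping: one must verify that the second twist does not disturb the first, which amounts to checking that cross terms like $i_2^* \pi_1^* \sN_2\inv$ factor through $S$ and become trivial after the final shrinking.
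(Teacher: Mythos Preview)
Your approach matches the paper's: build $\sX\dual$ as the quotient of $R^1\pi_*\O\sX$ by $R^1\pi_*\Z_{\sX}$, set $\sY = \sX \times_S \sX\dual$, extend the topological class of the Poincar\'e bundle to a flat section over a contractible Stein $S$, lift it to a line bundle via the exponential sequence, and normalise along the two zero-sections. The paper lifts first inside $R^1p_*\O\sY^\times$ and then invokes the five-term Leray sequence to reach $\Pic(\sY)$, whereas you run the exponential sequence on $\sY$ directly after collapsing Leray for $\Z_{\sY}$ and $\O\sY$; both routes rest on exactly the same vanishing input. Your bookkeeping worry at the end resolves exactly as you suspect: the cross terms $i_1^*\pi_2^*\sN_1\inv$ and $i_2^*\pi_1^*\sN_2\inv$ factor through $S$ and are therefore trivial on every fibre (so the final shrinking to $\Pic(S)=0$ is in fact not even needed for the fibrewise conclusion).

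One step in part~\labelcref{dualfamily} is wrong, however: the claim that ``$\chi(\T{\sX_s}) = 0$ leaves no room for jumping'' does not hold. Upper semicontinuity only gives $h^i(\T{\sX_s}) \le n\binom{n}{i}$ for $s$ near $0$, and the single linear relation $\sum_i (-1)^i h^i = 0$ is far from pinning these numbers down---for instance, all $h^i$ dropping to zero is perfectly consistent with both constraints. Your other alternative, the explicit Kuranishi description, is valid and in fact shows directly that every small deformation of $T$ is again a complex torus; the paper simply cites~\cite{Cat02} for this, and either reference makes the detour through constancy of $h^0(\T{\sX_s})$ and triviality of $\T{\sX_s}$ unnecessary.
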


The proof is based on the following computational lemma.
To fix notation, let ${T = \factor V \Lambda}$ be a complex torus.
Then we have $\pi_1(T) = \Lambda$ and consequently $\HH1.T.\Z. = \Lambda\dual \defn \Hom\Lambda.\Z.$.
By~\labelcref{dual torus}, it follows that $\HH1.T\dual.\Z. = \Lambda\ddual = \Lambda$.

\begin{lem} \label{id11}
The identity map of $\Lambda$, viewed as an element of $\HH2.T \x T\dual.\Z.$ via the natural maps
\begin{equation} \label{kunneth}
\End\Lambda. = \Lambda\dual \tensor \Lambda = \HH1.T.\Z. \tensor \HH1.T\dual.\Z. \inj \HH2.T \x T\dual.\Z.,
\end{equation}
is equal to $\cc1\sP$, the first Chern class of the Poincar\'e bundle on $T \x T\dual$.
In particular, it is of Hodge type $(1, 1)$.
\end{lem}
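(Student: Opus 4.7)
The plan is to combine the K\"unneth decomposition with the two defining properties of the Poincar\'e bundle to localise $\cc1\sP$ in the mixed K\"unneth summand, and then identify it as $\id_\Lambda$ by a pairing computation against $2$-cycles.

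By K\"unneth,
\[
\HH2.T \x T\dual.\Z. = \HH2.T.\Z. \oplus \bigl[\HH1.T.\Z. \tensor \HH1.T\dual.\Z.\bigr] \oplus \HH2.T\dual.\Z.,
\]
and the projections onto the two outer summands are realised by restriction to the slices $T \x \{t\}$ and $\{0\} \x T\dual$, respectively. The defining property $\sP|_{T \x \{t\}} \isom \sL_t \in \Picn(T)$ forces the $\HH2.T.\Z.$-component of $\cc1\sP$ to vanish, since $\cc1{\sL_t}=0$ for any $\sL_t$ in $\Picn(T)$. The normalisation $\sP|_{\{0\} \x T\dual} \isom \O{T\dual}$ kills the $\HH2.T\dual.\Z.$-component. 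Consequently $\cc1\sP$ lies in the middle summand, which is identified with $\End\Lambda.$ via~\eqref{kunneth}.

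To pin down which element of $\End\Lambda.$ this is, I would pair against the dual K\"unneth piece $\Hh1.T.\Z. \tensor \Hh1.T\dual.\Z. \subset \Hh2.T \x T\dual.\Z.$. A short exterior-algebra calculation gives $\bigl\langle \id_\Lambda,\, \gamma \x \delta \bigr\rangle = \delta(\gamma)$ for $\gamma \in \Lambda$ and $\delta \in \Lambda\dual$ represented by closed loops. On the geometric side, $\bigl\langle \cc1\sP,\, \gamma \x \delta \bigr\rangle$ equals the degree of $\sP$ restricted to the real $2$-torus $\gamma \x \delta \subset T \x T\dual$. I would compute this via the topological description $T\dual = \Hom \Lambda.U(1).$, in which each $t \in T\dual$ is the unitary monodromy character $\mu_t : \Lambda \to U(1)$ of the flat bundle $\sL_t$: then $\sP|_{\gamma \x \delta}$ is the mapping torus along $\delta$ of the flat line bundles on $\gamma \isom S^1$ with monodromy $\mu_t(\gamma)$, whose first Chern number equals the winding number of $t \mapsto \mu_t(\gamma) \in U(1)$ as $t$ traces $\delta$. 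Under the canonical identification $\Hh1.T\dual.\Z. = \Hom \Lambda.\Z. = \Lambda\dual$, this winding number is exactly $\delta(\gamma)$, matching the algebraic side.

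The main technical obstacle is this last degree-equals-winding-number calculation, which rests on unwinding the topological identification of $T\dual$ with the character variety $\Hom \Lambda.U(1).$ of $T$ and checking that the evaluation map $T\dual \to U(1)$, $t \mapsto \mu_t(\gamma)$, induces $\delta \mapsto \delta(\gamma)$ on $\Hh1.-.\Z.$. The $(1,1)$-assertion is automatic, since the first Chern class of any holomorphic line bundle lies in $H^{1,1}$.
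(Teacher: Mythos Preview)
Your argument is correct, but it takes a genuinely different route from the paper's. The paper identifies $\HH2.T \x T\dual.\Z.$ with the space of alternating integral $2$-forms on $\Lambda \x \Lambda\dual$, computes directly that under the K\"unneth inclusion $\id_\Lambda$ is sent to the form $\big((\lambda_1,f_1),(\lambda_2,f_2)\big)\mapsto f_2(\lambda_1)-f_1(\lambda_2)$, and then simply quotes~\cite[Thm.~2.5.1]{BL04} for the fact that this form equals $\cc1\sP$. In contrast, you first use the two defining properties of $\sP$ to localise $\cc1\sP$ in the mixed K\"unneth summand, and then pin it down by pairing with product $2$-cycles $\gamma\x\delta$ via a monodromy/winding-number computation on the real $2$-torus they represent. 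Your approach is more self-contained and makes the topological content of the statement transparent (it really only uses the universal flat-bundle description of $\sP$), at the cost of the ``degree $=$ winding number'' step, which is elementary but needs to be spelled out carefully. The paper's approach is shorter and purely algebraic, but outsources the key identification to Birkenhake--Lange.
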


\begin{proof}
The inclusion map in~\labelcref{kunneth} is given by the K\"unneth formula, that is, by pulling back and taking cup product.
Furthermore, $\HH2.T \x T\dual.\Z.$ is naturally identified with the set of alternating integral $2$-forms on $\Lambda \x \Lambda\dual$.
Spelled out, this means that an element $g \tensor \mu \in \Lambda\dual \tensor \Lambda$ is sent to the following $2$-form on $\Lambda \x \Lambda\dual$:
\[ \big( (\lambda_1, f_1), (\lambda_2, f_2) \big) \mapsto g(\lambda_1) f_2(\mu) - g(\lambda_2) f_1(\mu). \]
Now, choose a basis $\gamma_1, \dots, \gamma_{2n}$ of $\Lambda$ and let $\gamma_1\dual, \dots, \gamma_{2n}\dual$ be the dual basis of $\Lambda\dual$.
Then $\id_\Lambda = \sum_{i=1}^{2n} \gamma_i\dual \tensor \gamma_i$ and by the above formula, under~\labelcref{kunneth} this gets sent to
\[ \big( (\lambda_1, f_1), (\lambda_2, f_2) \big) \mapsto f_2(\lambda_1) - f_1(\lambda_2). \]
According to~\cite[Thm.~2.5.1]{BL04}, this form represents $\cc1\sP$.
This proves the first claim.
The second one is then clear since the first Chern class of any line bundle is of type $(1, 1)$.
\end{proof}

\begin{proof}[Proof of \cref{deformpoincare}]
Any deformation of a complex torus is a complex torus, so all fibres $\sX_s$ are complex tori by~\cite[Theorem~4.1]{Cat02}. Now we consider the total space of the sheaf $\sX\dual \defn \factor{\RR1.\pi.\sO_{\sX}.}{\RR1.\pi.\Z_{\sX}.}$ on $S$.
Since $\RR1.\pi.\sO_{\sX}.$ is a vector bundle and in each fibre we are dividing out a lattice, it is clear that $\pi\dual \from \sX\dual \to S$ is a flat family of complex tori.
By definition, the fibres $(\sX\dual)_s$ are the dual tori $(\sX_s)\dual$.
Hence $\pi\dual$ is a deformation of $T\dual$, called the \emph{dual family} of $\sX$.
The fibre product $\sY \defn \sX \x_S \sX\dual$ fits into a commutative diagram
\[ \xymatrix{
\sY \ar^{r'}[rr] \ar_r[d] \ar^p[drr] & & \sX\dual \ar^{\pi\dual}[d] \\
\sX \ar^\pi[rr] & & S,
} \]
where $p \from \sY \to S$ is a deformation of $T \x T\dual$.
For each $s \in S$, the fibre $\sY_s$ is the complex torus $\sX_s \x \sX_s\dual$.
This proves~\labelcref{dualfamily}.

In order to fix the group structure on the complex tori $\sX_s$, we pick an arbitrary section $\sigma \from S \to \sX$ of $\pi$ and regard it as the zero section.
The family $\sX\dual \to S$ already comes equipped with a zero section $\tau \from S \to \sX\dual$.
Pulling back induces sections $j = ( \sigma \circ \pi\dual, \id_{\sX\dual} ) \from \sX\dual \to \sY$ of $r'$ and $i = ( \id_\sX, \tau \circ \pi ) \from \sX \to \sY$ of $r$.

After shrinking $S$, we may assume that $S$ is Stein and contractible and hence in particular the sheaf $\RR2.p.{\Z_\sY}.$ is trivial.
Consider the cohomology class $\cc1\sP$ on the central fibre $\sY_0 = T \x T\dual$.
By the triviality of $\RR2.p.{\Z_\sY}.$, this extends to a global section $\phi$ of the latter sheaf and by \cref{id11}, for all $s \in S$ the class $\phi(s) \in \HH2.\sY_s.\Z.$ continues to be the first Chern class of the Poincar\'e bundle $\sP_s$ on $\sY_s$.
In particular, $\phi(s)$ is of type $(1, 1)$ for all $s \in S$.
The pushforward of the exponential sequence on $\sY$, more precisely the exact sequence
\[ \RR1.p.{\O\sY^\x}. \lto \RR2.p.\Z_\sY. \lto \RR2.p.\O\sY., \]
then shows that $\phi$ lifts to a section $\wt\phi \in \HH0.S.{\RR1.p.\O\sY^\x.}.$, at least after shrinking $S$.
The space $S$ being Stein and contractible, the sheaf cohomology groups $\HH i.S.\sO_S.$ and $\HH i.S.\Z_S.$ vanish for $i > 0$.
By the exponential sequence on $S$, also $\HH i.S.\O S^\x.$ vanishes for $i > 0$.
Hence the five-term exact sequence associated to the Leray spectral sequence for $p$ and $\O\sY^\x$ induces an isomorphism $\Pic(\sY) \isom \HH0.S.{\RR1.p.\O\sY^\x.}.$.
This shows that the germ $\wt\phi$ comes from a line bundle $\sL$ on $\sY$.
By construction, $\sL$ has the property that $\cc1{\sL_s} = \cc1{\sP_s}$ for each $s \in S$, where $\sL_s \defn \sL\big|_{\sY_s}$.
We normalize $\sL$ by replacing it with
\[ \sL \tensor r^* \big( i^* \sL\inv \big) \tensor r'^* \big( j^* \sL\inv \big). \]
Then by the uniqueness in~\labelcref{poincare}, we have $\sL_s \isom \sP_s$ for each $s \in S$.
This is the statement of~\labelcref{relpoincare}.
\end{proof}

\subsection{Proof of \cref{admitsapprox}}

Consider the miniversal deformation of $T = \sX_0$,
\[ \pi \from \sX \to S \defn \Def(T). \]
Using notation from \cref{deformpoincare}, let $p \from \sY \to S$ be the deformation of $T \x T\dual$ with fibres $\sY_s = \sX_s \x \sX_s\dual$, and let $\sL$ be the line bundle on $\sY$ restricting to the Poincar\'e bundle on each fibre.

Consider the rank two vector bundle $\sE_S \defn \sL \oplus \sL\inv$ on $\sY$, as well as its projectivization $\bP(\sE_S) \to \sY$.
It is clear that $\bP(\sE_S) \to S$ is a deformation of $\bP(\sE)$.
Furthermore the action of $G = \factor{\Z}{2\Z} \x \factor{\Z}{2\Z}$ on the central fibre described in \cref{voisin construction} extends to all of $\bP(\sE_S)$, since the other fibres are built in the same way.
We denote by $\sQ_S$ the quotient of $\bP(\sE_S)$ by $G$.
Then $\sQ_S \to S$ is a deformation of its central fibre $(\sQ_S)_0 \isom \sQ$.

It is well-known that $\pi \from \sX \to S$ is an algebraic approximation of $T$, see~\cite[Ch.~5, Ex.~1]{Voi03}.
Also $\Picn$ of any projective variety is again projective~\cite[Prop.~7.16]{Voi02}.
Finally, projectivized vector bundles over projective varieties and finite quotients there\-of remain projective~\cite{Laz04a},~\cite[Ch.~IV, Prop.~1.5]{Knu71}.
Taken together, this shows that $\sQ_S \to S$ is an algebraic approximation of $\sQ$, as desired. \qed

\section{$X$ cannot be contracted further}

The purpose of this section is to prove~\labelcref{main.cont}.

\begin{thm}[MMP for $X$] \label{MMP X}
\label{contr}
Let $X$ be as in \cref{construction}.
\begin{enumerate}
\item\label{bimcontraction} Any bimeromorphic map $X \to X'$ onto a normal complex space $X'$ is an isomorphism.
\item\label{morifiberspace} Every run of the $K_X$-MMP immediately terminates with one of the Mori fibre spaces $X \to \sQ$ or $X \to \sQ_\phi$.
\end{enumerate}
\end{thm}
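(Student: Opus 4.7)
The plan is to compute the Mori cone $\NE(X)$ explicitly, show it is spanned by exactly two extremal rays --- both $K_X$-negative --- generated by the $\P1$-fibers of the two Mori fibrations $X \to \sQ$ and $X \to \sQ_\phi$, and then deduce both claims from this structural fact.

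I would compute $\NE(X)$ by lifting to the finite cover $Z \to X$ and exploiting the $\P1 \x \P1$-bundle structure $\pi \from Z \to B \defn T \x T\dual$. The decisive geometric input is that $B$ contains no compact analytic curves. Every compact analytic subset of a complex torus being a translate of a subtorus, this reduces to showing that neither $T$ nor $T\dual$ admits a nontrivial $1$-dimensional complex subtorus. The \flaccid hypothesis on $(T, \phi)$ supplies exactly this: the full-symmetric Galois condition forces the characteristic polynomial of $\phi_*$ on $\Hh1.T.\Q.$ to be $\Q$-irreducible, so that $\Hh1.T.\Q.$ is a simple $\Q[\phi_*]$-module; combined with the Hodge decomposition of $\Hh1.T.\C.$, this rules out all nontrivial sub-Hodge structures and therefore all nontrivial complex subtori. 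I expect the subvariety rigidity of \flaccid tori to be the main technical obstacle.

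Once the absence of compact curves in $B$ is established, every compact analytic curve $C \subset Z$ is contained in a single $\P1 \x \P1$-fiber of $\pi$, so $[C]$ is a non-negative combination of the two ruling classes $[\ell]$ (fiber of $q$) and $[\ell_\phi]$ (fiber of $q_\phi$). Hence $\NE(Z) = \R_{\ge 0}[\ell] + \R_{\ge 0}[\ell_\phi]$, and both rays are $K_Z$-negative because $-K_Z$ restricts to $\sO(2,2)$ on each fiber of $\pi$. Passing to $G$-invariants gives the analogous description of $\NE(X)$, and contracting the two rays produces the Mori fibrations $X \to \sQ$ and $X \to \sQ_\phi$; this proves~\labelcref{morifiberspace}.

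For~\labelcref{bimcontraction}, suppose $\mu \from X \to X'$ were a proper bimeromorphic morphism to a normal complex space and not an isomorphism. Its exceptional locus would then contain some contracted curve, which by the above cone description lies numerically on one of the two extremal rays; a straightforward deformation-theoretic argument then shows that $\mu$ must contract some generic smooth $\P1$-fiber $C$ of --- say --- $\mu_1 \from X \to \sQ$. Since $\mu_1$ is smooth near such a $C$, an analytic neighborhood of $C$ in $X$ is biholomorphic to $C \x \Delta^{2n+1}$ and $N_{C/X} \isom \sO_C^{\oplus 2n+1}$ is trivial. As every bounded holomorphic function on $C \x \Delta^{2n+1}$ is constant along $C$, the only bimeromorphic contraction of $C \subset C \x \Delta^{2n+1}$ to a point in a normal complex space is the projection onto $\Delta^{2n+1}$. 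Thus $\mu$ must be constant on every fiber of $\mu_1$ and factor as $X \to \sQ \to X'$, contradicting the bimeromorphicity of $\mu$.
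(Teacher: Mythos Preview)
Your overall strategy --- no curves in $T \times T\dual$, hence every curve in $Z$ lies in a single $\P1 \times \P1$-fibre of $\pi$, hence any contraction factors through $q$ or $q_\phi$ --- matches the paper's. The cone computation $\NE{Z} = \R_{\ge 0}[\ell] + \R_{\ge 0}[\ell_\phi]$ is correct and handles~\labelcref{morifiberspace} cleanly.

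The argument for~\labelcref{bimcontraction}, however, has a real gap. You assert that a curve $C$ contracted by $\mu$ ``lies numerically on one of the two extremal rays'', but the cone description does not yield this: $C$ could perfectly well have class $a[\ell] + b[\ell_\phi]$ with both $a, b > 0$ (e.g.\ a diagonal in some fibre), which sits in the interior of the cone. For an \emph{arbitrary} proper bimeromorphic morphism to a normal space --- not an extremal contraction --- there is no mechanism forcing contracted curves onto a single ray, and your subsequent ``straightforward deformation-theoretic argument'' does not obviously bridge this. The paper sidesteps the issue entirely: since $C$ lies in some fibre $F \cong \P1 \times \P1$, the restriction of the lifted map to $F$ has image of dimension $\le 1$, and any such morphism out of $\P1 \times \P1$ must contract a full ruling (fibres of a map $\P1 \times \P1 \to \text{curve}$ have self-intersection zero, hence class a multiple of $[\ell]$ or $[\ell_\phi]$). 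That elementary observation is what your deformation step would need to replace.

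Two smaller points. First, in the analytic category the existence of a contracted curve in the exceptional locus is not automatic; the paper invokes Hironaka's Chow lemma to show that positive-dimensional fibres of a proper bimeromorphic map are Moishezon, hence contain curves. Second, your local $C \times \Delta^{2n+1}$ argument only shows that fibres of $\mu_1$ \emph{near} $C$ are contracted; passing from nearby fibres to all fibres still requires an open-and-closed argument on the base, which is precisely the content of the paper's auxiliary \cref{fibercontr}.
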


\subsection{Auxiliary results}

The following proposition is a strengthening of~\cite[Ch.~III, \S4.3,~Lemma]{Sha13} in the analytic setting.
If $\pi$ is a submersion and $Z$ is compact \kahler, then the claim follows easily from the fact that all fibres of $\pi$ have the same homology class.
However, for the applications we have in mind, $Z$ can only be assumed to be of class $\cC$ and then it may contain curves that are homologous to zero.

\begin{prp}[Maps contracting fibres of another map] \label{fibercontr}
Let $\pi \from E \to S$ be a proper surjective morphism with connected fibres between complex spaces $E$ and $S$.
Furthermore let $f \from E \to Z$ be any holomorphic map to another complex space $Z$.
\begin{enumerate}
\item \label{fibercontr.1} If for some $s_0 \in S$, the map $f$ contracts the fibre $\pi\inv(s_0)$ to a point, then it contracts all fibres $\pi\inv(s)$ for $s$ in a non-empty Zariski-open subset of $S$.
\item \label{fibercontr.2} If moreover $\pi$ is equidimensional and $S$ is locally irreducible and connected (e.g.~if $S$ is normal and irreducible), then $f$ contracts each fibre of $\pi$ to a point.
\end{enumerate}
\end{prp}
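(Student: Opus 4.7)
The plan for \labelcref{fibercontr.1} is to pass to the joint map $g \defn (f, \pi) \from E \to Z \x S$, which inherits properness from $\pi$ (the preimage of a compact subset of $Z\x S$ sits in the $\pi$-preimage of its projection to $S$). By Remmert's proper mapping theorem the image $\Gamma \defn g(E)$ is a closed analytic subset of $Z \x S$, and the projection $p \from \Gamma \to S$ is proper with fibres $p\inv(s) = f(\pi\inv(s))$, each connected as the image of the connected set $\pi\inv(s)$. Upper-semicontinuity of fibre dimension for $p$ then shows that $V \defn \{ s \in S : \dim p\inv(s) \geq 1 \}$ is a closed analytic subset of $S$; on its Zariski-open complement $U$ every fibre $f(\pi\inv(s))$ is connected, nonempty, and zero-dimensional, hence a single point. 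Since $s_0 \in U$ by hypothesis, $U$ is nonempty.

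For~\labelcref{fibercontr.2}, local irreducibility of $S$ makes any two distinct irreducible components of $S$ disjoint, so connectedness forces $S$ to be irreducible, and therefore $U$ is dense. The plan is to show that $U$ is additionally closed; then $U$ will be a nonempty clopen subset of the connected space $S$, giving $U = S$.

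For the closedness, I first observe that the map $\sigma \from U \to Z$ sending $s$ to the unique point of $f(\pi\inv(s))$ is holomorphic, since its graph equals the analytic subset $\Gamma|_U$. Now fix $s_1 \in \overline U$ and a sequence $(s_n) \subset U$ with $s_n \to s_1$. The crucial input is Remmert's open mapping theorem: under equidimensionality of $\pi$ and local irreducibility of $S$, the map $\pi$ is open. Choosing any $e_n \in \pi\inv(s_n)$, properness of $\pi$ lets me extract a subsequence with $e_n \to e \in \pi\inv(s_1)$, so continuity of $f$ gives $\sigma(s_n) = f(e_n) \to f(e) \ndef z$. For an arbitrary $e' \in \pi\inv(s_1)$, openness of $\pi$ at $e'$ furnishes, after a further passage to a subsequence, points $e'_{n_k} \in \pi\inv(s_{n_k})$ with $e'_{n_k} \to e'$; comparing $f(e'_{n_k}) = \sigma(s_{n_k}) \to z$ with $f(e'_{n_k}) \to f(e')$ forces $f(e') = z$. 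Hence $f$ is constant on $\pi\inv(s_1)$ with value $z$, showing $s_1 \in U$.

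The main obstacle is the appeal to Remmert's open mapping theorem: it is precisely the interplay of equidimensionality of $\pi$ with local irreducibility of $S$---neither assumption suffices on its own---that produces openness of $\pi$ at every point of the central fibre. Without openness, one cannot approximate an arbitrary $e' \in \pi\inv(s_1)$ by points of nearby fibres, and the limit argument establishing the closedness of $U$ collapses; this is consistent with the fact that \labelcref{fibercontr.2} genuinely fails without these extra hypotheses.
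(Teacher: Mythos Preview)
Your argument is correct and, for~\labelcref{fibercontr.1}, essentially identical to the paper's: both pass to the analytic image in $Z \times S$ and invoke upper semicontinuity of fibre dimension for the projection to $S$. For~\labelcref{fibercontr.2} you use the same key input as the paper---Remmert's characterisation of openness via equidimensionality over a locally irreducible base---but your execution differs slightly. You show $U$ is sequentially closed by a limiting argument: approximate an arbitrary point $e'$ of the limit fibre by points in nearby fibres (this is exactly where openness enters) and compare values of $f$. The paper instead shows directly that $S \setminus S_0$ is open: if $f(E_s)$ contains two points $x \ne y$, separate them by disjoint open sets $U_x, U_y \subset Z$, and then $\pi(f\inv(U_x)) \cap \pi(f\inv(U_y))$ is an open neighbourhood of $s$ contained in $S \setminus S_0$, again using openness of $\pi$. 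The paper's version is shorter and avoids sequences, but the two arguments are equivalent in spirit.

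One incidental remark: your claim that $\sigma \from U \to Z$ is holomorphic because its graph is analytic would, strictly speaking, need a word about why the projection $\Gamma|_U \to U$ is biholomorphic (e.g.\ it is finite and bijective, but $U$ need not be normal). However, you never actually use holomorphicity---or even continuity---of $\sigma$ in the sequel; only the identity $\sigma(s_n) = f(e_n)$ and continuity of $f$ are invoked. So this is harmless, but you could simply drop the sentence.
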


\begin{proof}
We denote the fibres of $\pi$ as $E_s \defn \pi\inv(s)$.
For~\labelcref{fibercontr.1}, we want to show that the set
\[ S_0 \defn \big\{ s\in S \;\big|\; f(E_s) \text{ is a point} \big\} \subset S \]
is Zariski-open in $S$.
We consider the graph $\Gamma$ of $f$, which is closed in $E \x Z$.
The map $\pi \x \id_Z$ is closed because $\pi$ is proper~\cite[Ch.~III, Cor.~4.3]{GPR94} and maps $\Gamma$ onto the image $\Gamma'$ of $\pi \x f$.
Thus $\Gamma'$ is an analytic subspace of $S \x Z$.
The projection $p \from \Gamma' \to S$ has fibres $p\inv(s) = \{ s \} \x f(E_s)$, and by assumption $p\inv(s_0)$ is a point.
Hence the subset of $S$ where the fibres of $p$ are zero-dimensional is non-empty, and it is Zariski-open by~\cite[Ch.~II, Thm.~1.16]{GPR94}.
This set equals $S_0$ because the fibres $E_s$ are connected.

For~\labelcref{fibercontr.2}, we assume additionally that $S$ is locally irreducible and connected.
Then equidimensionality of $\pi$ is equivalent to $\pi$ being an open map~\cite[Ch.~II, Thm.~1.18]{GPR94}.
We will show that $S_0$ is also closed in $S$.
Then connectedness of $S$ implies $S = S_0$.

If $S \setminus S_0 \ne \emptyset$, let $s \in S \setminus S_0$ be arbitrary.
Then $f(E_s)$ contains at least two distinct points $x, y \in Z$.
As $Z$ is Hausdorff, we can separate these points by disjoint open analytic neighborhoods $U_x, U_y \subset S$.
The preimages $f\inv(U_x), f\inv(U_y) \subset E$ are disjoint and open in $E$.
As $\pi$ is an open map, the set
\[ U \defn \pi \big( f\inv(U_x) \big) \cap \pi \big( f\inv(U_y) \big) \]
is an open neighborhood of $s$ in $S$.
Note that for any $t \in U$, the set $f(E_t)$ contains at least two distinct points.
Hence $U \subset S \setminus S_0$, i.e.~$S \setminus S_0$ is open in $S$.
\end{proof}

\begin{prp}[Bimeromorphic maps contract curves] \label{exc moishezon}
Let $f \from X \to Y$ be a proper bimeromorphic morphism of normal complex spaces.
Then for every $y \in Y$, the fibre $f\inv(y)$ is Moishezon.
In particular, if $f$ is not an isomorphism then there exists a compact curve $C \subset X$ which is mapped to a point by $f$.
\end{prp}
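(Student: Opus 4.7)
The plan is to derive both statements from the analytic version of Chow's lemma due to Hironaka: for any proper morphism of complex spaces there exists a projective bimeromorphic modification of the source making the composition with the given morphism into a projective morphism. Applied to $f \from X \to Y$, this produces a diagram
\[
\xymatrix{
X' \ar^-\pi[r] \ar_-g[dr] & X \ar^-f[d] \\
 & Y
}
\]
with $\pi$ projective bimeromorphic and $g = f \circ \pi$ projective.

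Having this, the first assertion is nearly immediate. For any $y \in Y$, the fibre $g\inv(y)$ is a projective complex space, in particular Moishezon (each of its irreducible components is), and $\pi$ restricts to a proper surjection $g\inv(y) \surj f\inv(y)$. It then suffices to argue, component by component, that an irreducible compact complex space $B$ dominated by a Moishezon space $A$ is itself Moishezon. For this one pulls back a very ample line bundle from a projective modification of $A$ and uses general hyperplane slices (combined with the projection formula) to produce $\dim B$ algebraically independent meromorphic functions on $B$.

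For the second assertion, assume $f$ is not an isomorphism. Since $X$ and $Y$ are normal and $f$ is proper bimeromorphic, Zariski's main theorem (in its analytic form) guarantees that all fibres are connected and that $f$ is an isomorphism precisely when every fibre is a point; hence there exists $y \in Y$ such that $f\inv(y)$ has positive dimension. Choose an irreducible component $F \subset f\inv(y)$ of positive dimension. By the first part, $F$ is Moishezon, so it admits a projective bimeromorphic modification $\sigma \from \wt F \to F$ with $\wt F$ an irreducible projective variety of positive dimension. Cutting $\wt F$ with a sufficiently general complete intersection of ample divisors produces a curve $\wt C \subset \wt F$ not contained in the $\sigma$-exceptional locus; its image $C \defn \sigma(\wt C) \subset F \subset X$ is then a compact curve contracted to $y$ by $f$.

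The main obstacle I anticipate is the chain of technical facts in the first step: Hironaka's Chow lemma for proper morphisms of possibly reducible and non-reduced complex spaces, and the preservation of the Moishezon property under proper surjective images. Both are standard, but some care is needed to apply them component-by-component once $X'$ and the fibres $g\inv(y)$ are allowed to be reducible, and to keep track of which irreducible components of $f\inv(y)$ are actually dominated by which components of $g\inv(y)$.
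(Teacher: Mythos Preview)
Your proposal is correct and follows essentially the same route as the paper: invoke Hironaka's Chow lemma to dominate $f$ by a projective morphism $g$, use the projective fibres $g\inv(y)$ surjecting onto $f\inv(y)$ to deduce the Moishezon property, and then extract a curve from a positive-dimensional Moishezon fibre. The paper is terser---it simply asserts that a proper image of a projective space is Moishezon and that a positive-dimensional Moishezon space contains a curve---whereas you sketch these steps explicitly, but the underlying argument is the same.
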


\begin{proof}
By Hironaka's Chow Lemma~\cite[Cor.~2]{HironakaFlattening}, there exists a projective bimeromorphic morphism $g \from Y' \to Y$ which factors through $f$ via a morphism $h \from Y' \to X$.
Then $h$ is automatically a bimeromorphism and closed, hence $h$ surjects for any $y \in Y$ the fibre $g\inv(y)$ onto the fibre $f\inv(y)$.
As $g\inv(y)$ is projective, the fibre $f\inv(y)$ is Moi\-shezon.
If $f$ is not an isomorphism, then some fibre $f\inv(y_0)$ is positive-dimensional.
Being Moishezon, it must contain a curve, which is then mapped to the point $y_0$.
\end{proof}

\subsection{Proof of \cref{MMP X}}

Let $\rho \from Z \to X = \factor ZG$ be the quotient map.
Let $f \from X \to X'$ be a bimeromorphic map onto a normal complex space $X'$.
As $f \circ \rho$ is proper, we can consider the Stein factorization $f \circ \rho = \rho' \circ f_Z$, where $f_Z \from Z \to Z'$ is bimeromorphic, $\rho'$ is finite and $Z'$ is normal.
If $f$ is not an isomorphism, then by \cref{exc moishezon} it contracts a curve $C \subset X$.
Let $C_Z \subset Z$ be any curve contained in $\rho\inv(C)$.
Then $f_Z$ contracts $C_Z$ and in particular $f_Z$ is not an isomorphism.
So we have reduced~\labelcref{bimcontraction} to showing that every bimeromorphic map $g \from Z \to Z'$ with $Z'$ normal is an isomorphism.

If such $g$ is not an isomorphism, then by \cref{exc moishezon} it contracts a curve $C \subset Z$.
The image $\pi(C)$ has to be a point, as $T \x T\dual$ does not contain any curves by~\cite[Lemma~7]{Voi06}.
Hence $C$ is contained in the fibre $\pi\inv \big( \pi(C) \big)$.
This fibre is isomorphic to $\P1 \x \P1$ and the restrictions of $q$ and $q_\phi$ to it are nothing but the projections onto the first and second factor, respectively.
Any curve $C$ in $\P1 \x \P1$ is numerically equivalent to an effective linear combination of the horizontal and the vertical fibre.
Hence any morphism from $\P1 \x \P1$ contracting $C$ contracts at least a horizontal or a vertical fibre.
So $g$ contracts a fibre of $q$ or a fibre of $q_\phi$.

If $g$ contracts a fibre of the $\P1$-bundle $q \from Z \to \bP(\sE)$, then by \cref{fibercontr} every fibre of $q$ is contracted by $g$.
In particular, $g$ factors through $q$, contradicting the assumption that $g$ is bimeromorphic.
Analogously, if $g$ contracts a fibre of $q_\phi$, then it factors through $q_\phi$ and we get a similar contradiction.
This proves~\labelcref{bimcontraction}.

Concerning~\labelcref{morifiberspace}, let us first note that both $X \to \sQ$ and $X \to \sQ_\phi$ are Mori fibre spaces since $K_X$ is relatively ample and the relative Picard numbers are $\rho(X / \sQ) = \rho(X / \sQ_\phi) = 1$.
Conversely, let $\psi \from X \to W$ be the first map produced by the $K_X$-MMP.
By~\labelcref{bimcontraction}, $\psi$ can be neither a divisorial nor a small contraction.
Hence $\psi$ is a \mfs.
By an argument completely analogous to the proof of~\labelcref{bimcontraction}, we see that $\psi$ has to factor through either $X \to \sQ$ or $X \to \sQ_\phi$.
In the first case, it has to be equal to $X \to \sQ$ since otherwise $\rho(X / W)$ would be at least two.
In the second case, $\psi$ is equal to $X \to \sQ_\phi$ for the same reason.
The proof of~\labelcref{morifiberspace} is thus finished. \qed

\section{Proof of \cref{main}}

Let $n \ge 10$ be an arbitrary even integer.
Pick a \flaccid torus $(T, \phi)$ of dimension $(n - 2) / 2 \ge 4$, and do \cref{construction} for this choice of $T$.
The resulting space $X$ will be our example:
Using notation from \cref{construction}, we have $X = \factor Z G$, where $Z$ is obviously uniruled and \kahler.
Hence also $X$ is uniruled, and it is \kahler by~\cite[Ch.~IV, Cor.~1.2]{Var89}.
Now, \cref{sing} and \cref{pi1X} imply~\labelcref{main.sg}, and~\labelcref{bimcontraction} is~\labelcref{main.cont}.
By~\labelcref{morifiberspace} our variety $X$ admits the \mfs $X \to \sQ$, where the base $\sQ$ admits an algebraic approximation by \cref{admitsapprox}.
This proves~\labelcref{main.mori}, with $Y = \sQ$.
However, we showed in \cref{approximation} that $X$ itself does not admit an algebraic approximation.
This is~\labelcref{main.approx}.

\providecommand{\bysame}{\leavevmode\hbox to3em{\hrulefill}\thinspace}
\providecommand{\MR}{\relax\ifhmode\unskip\space\fi MR}
\providecommand{\MRhref}[2]{%
  \href{http://www.ams.org/mathscinet-getitem?mr=#1}{#2}
}
\providecommand{\href}[2]{#2}


\begin{thebibliography}{GPR94}

\bibitem[BL04]{BL04}
{\sc C.~Birkenhake and H.~Lange}: \emph{{Complex Abelian Varieties}},
  {Grundlehren der mathematischen Wissenschaften}, vol. 302, Springer-Verlag,
  2004.

\bibitem[Cat02]{Cat02}
{\sc F.~Catanese}: \emph{Deformation types of real and complex manifolds},
  Contemporary trends in algebraic geometry and algebraic topology ({T}ianjin,
  2000), Nankai Tracts Math., vol.~5, World Sci. Publ., River Edge, NJ, 2002,
  pp.~195--238. {\sf\scriptsize 1945361}

\bibitem[CHL19]{ClaudonHoeringKahlerGroups}
{\sc B.~Claudon, A.~H{\"o}ring, and H.-Y. Lin}: \emph{{The fundamental group of
  compact K{\"a}hler threefolds}}, Geometry \& Topology \textbf{23} (2019),
  3233--3271.

\bibitem[Gra18]{AlgApprox}
{\sc P.~Graf}: \emph{{Algebraic approximation of K{\"a}hler threefolds of
  Kodaira dimension zero}}, Math.~Annalen \textbf{371} (2018), 487--516.

\bibitem[GPR94]{GPR94}
{\sc H.~Grauert, {\relax Th}.~Peternell, and R.~Remmert}: \emph{{Several
  Complex Variables VII}}, Encyclopaedia of Mathematical Sciences, vol.~74,
  Springer-Verlag, 1994.

\bibitem[Hir75]{HironakaFlattening}
{\sc H.~Hironaka}: \emph{{Flattening Theorem in Complex-Analytic Geometry}},
  Amer.~J.~Math. \textbf{97} (1975), no.~2, 503--547.

\bibitem[Knu71]{Knu71}
{\sc D.~Knutson}: \emph{{Algebraic Spaces}}, Lecture Notes in Mathematics, vol.
  203, Springer, 1971.

\bibitem[Kod63]{Kod63}
{\sc K.~Kodaira}: \emph{{On Compact Analytic Surfaces, III}}, Ann.~Math.
  \textbf{78} (1963), no.~1, 1--40.

\bibitem[Kol07]{Kol07}
{\sc J.~Koll{\'a}r}: \emph{{Lectures on resolution of singularities}}, Annals
  of Mathematics Studies, vol. 166, Princeton University Press, Princeton, NJ,
  2007.

\bibitem[Kol13]{Kol13}
{\sc J.~Koll\'ar}: \emph{Singularities of the minimal model program}, Cambridge
  Tracts in Mathematics, vol. 200, Cambridge University Press, Cambridge, 2013,
  With a collaboration of S\'andor Kov\'acs. {\sf\scriptsize 3057950}

\bibitem[Laz04]{Laz04a}
{\sc R.~Lazarsfeld}: \emph{{Positivity in Algebraic Geometry I}}, Ergebnisse
  der Mathematik und ihrer Grenzgebiete, 3. Folge, vol.~48, Springer-Verlag,
  Berlin, 2004.

\bibitem[Lin16]{Lin16}
{\sc H.-Y. Lin}: \emph{{The bimeromorphic Kodaira problem for compact
  K{\"a}hler threefolds of Kodaira dimension $1$}},
  \href{http://arxiv.org/abs/1612.09271}{arXiv:1612.09271 [math.AG]}, December
  2016.

\bibitem[Lin17a]{Lin17b}
{\sc H.-Y. Lin}: \emph{{Algebraic approximations of uniruled compact K{\"a}hler
  threefolds}}, \href{http://arxiv.org/abs/1710.01083}{arXiv:1710.01083
  [math.AG]}, October 2017.

\bibitem[Lin17b]{Lin17a}
{\sc H.-Y. Lin}: \emph{{Algebraic approximations of compact K{\"a}hler
  threefolds of Kodaira dimension $0$ or $1$}},
  \href{http://arxiv.org/abs/1704.08109}{arXiv:1704.08109v2 [math.AG]}, version
  2, October 2017.

\bibitem[Nor83]{Nor83}
{\sc M.~V. Nori}: \emph{Zariski's conjecture and related problems}, Annales
  scientifiques de l'{\'E}.N.S \textbf{16} (1983), no.~2, 305--344.

\bibitem[Sha13]{Sha13}
{\sc I.~R. Shafarevich}: \emph{{Basic Algebraic Geometry 1: Varieties in
  Projective Space}}, third ed., Springer-Verlag, Heidelberg, 2013.

\bibitem[Spa56]{Spanier56}
{\sc E.~Spanier}: \emph{The homology of {K}ummer manifolds},
  Proc.~Amer.~Math.~Soc. \textbf{7} (1956), 155--160.

\bibitem[Ste03]{Ste03}
{\sc J.~Stevens}: \emph{Deformations of singularities}, Lecture Notes in
  Mathematics, vol. 1811, Springer-Verlag, Berlin, 2003.

\bibitem[Tak03]{Tak03}
{\sc S.~Takayama}: \emph{Local simple connectedness of resolutions of
  log-terminal singularities}, Int.~J.~Math. \textbf{14} (2003), no.~8,
  825--836.

\bibitem[vdW93]{vanderWaerdenAlgebra}
{\sc B.~L. van~der Waerden}: \emph{{Algebra I}}, ninth ed., Springer, 1993.

\bibitem[Var89]{Var89}
{\sc J.~Varouchas}: \emph{K\"ahler spaces and proper open morphisms},
  Math.~Ann. \textbf{283} (1989), no.~1, 13--52.

\bibitem[Voi02]{Voi02}
{\sc C.~Voisin}: \emph{{Hodge Theory and Complex Algebraic Geometry I}},
  Cambridge Studies in Advanced Mathematics, vol.~76, Cambridge University
  Press, 2002.

\bibitem[Voi03]{Voi03}
{\sc C.~Voisin}: \emph{{Hodge Theory and Complex Algebraic Geometry II}},
  Cambridge Studies in Advanced Mathematics, vol.~77, Cambridge University
  Press, 2003.

\bibitem[Voi04]{Voi04}
{\sc C.~Voisin}: \emph{{On the homotopy types of compact K{\"a}hler and complex
  projective manifolds}}, Invent.~Math. \textbf{157} (2004), 329--343.

\bibitem[Voi06]{Voi06}
{\sc C.~Voisin}: \emph{{On the homotopy types of K{\"a}hler manifolds and the
  birational Kodaira problem}}, J.~Diff.~Geom. \textbf{72} (2006), 43--71.

\end{thebibliography}
\end{document}